\newcommand{\B}{\ensuremath{\mathrm{B}}}
\newcommand{\C}{\ensuremath{\mathbb{C}}}
\newcommand{\D}{\mathrm{D}}
\newcommand{\E}{\ensuremath{\mathbb{E}}}
\let\H\relax %\H ne fait plus rien !
\newcommand{\H}{\mathrm{H}} %H^\infty
\let\L\relax %\L ne fait plus rien !
\newcommand{\L}{\mathrm{L}}
\newcommand{\M}{\mathrm{M}}
\newcommand{\N}{\ensuremath{\mathcal{N}}}
\newcommand{\R}{\ensuremath{\mathbb{R}}}
\newcommand{\W}{\mathrm{W}}
\newcommand{\e}{\mathrm{e}}
\renewcommand{\d}{\mathop{}\mathopen{}\mathrm{d}} %opérateur différentiel}
\let\i\relax %\i ne fait plus rien !
\newcommand{\i}{\mathrm{i}}
\newcommand{\ovl}{\overline}
\newcommand{\Id}{\mathrm{Id}} %Identity
\newcommand{\VN}{\mathrm{VN}} %von Neumann algebras
\newcommand{\la}{\langle}\newcommand{\ra}{\rangle}
\renewcommand{\leq}{\ensuremath{\leqslant}}
\renewcommand{\geq}{\ensuremath{\geqslant}}
\newcommand{\qed}{\hfill \vrule height6pt  width6pt depth0pt}
\newcommand{\norm}[1]{ \| #1  \|}
\newcommand{\bnorm}[1]{ \big\| #1  \big\|}
\newcommand{\xra}{\xrightarrow} %limit
\newcommand{\co}{\colon}
\newcommand{\ot}{\otimes}
\newcommand{\BMO}{{\mathrm{BMO}}}
\newcommand{\scr}{\mathscr} %belle police
\newcommand{\ov}{\overset}
\DeclareMathOperator{\Aut}{Aut} %automorphisme
\DeclareMathOperator{\tr}{Tr}
\newtheorem{thm}{Theorem}[section]
\newtheorem{prop}[thm]{Proposition}
\newtheorem{lemma}[thm]{Lemma}
\newtheorem{remark}[thm]{Remark}
\newenvironment{proof}[1][]{\noindent {\it Proof #1} : }{\hbox{~}\qed
\smallskip
}
\numberwithin{equation}{section}
\let\OLDthebibliography\thebibliography
\renewcommand\thebibliography[1]{
  \OLDthebibliography{#1}
  \setlength{\parskip}{0pt}
  \setlength{\itemsep}{0pt plus 0.3ex}
}
\begin{document}
\selectlanguage{english}
\title{\bfseries{Markov dilations of semigroups of Fourier multipliers}}
\date{}
\author{\bfseries{C\'edric Arhancet}}

\maketitle

%%%%%%%%%%%%%%%%%%%%%%%%%%%%%%%%%%%%%%%%%%%%%%%%%%%%%%%%%%%%%%%%
%%%%%%%%%%%%%%%%%%%%%%%%%%%%%%%%%%%%%%%%%%%%%%%%%%%%%%%%%%%%%%%%
\begin{abstract}
We describe a Markov dilation for any weak* continuous semigroup $(T_t)_{t \geq 0}$ of selfadjoint unital completely positive Fourier multipliers acting on the group von Neumann algebra $\VN(G)$ of a locally compact group $G$.
\end{abstract}
%%%%%%%%%%%%%%%%%%%%%%%%%%%%%%%

%%%%%%%%%%%%%%%%%%%%%%%%%%%%%%%%%%%%%%%%%%%%%%%%%%%%%%%%%%%%%%%%
%%%%%%%%%%%%%%%%%%%%%%%%%%%%%%%%%%%%%%%%%%%%%%%%%%%%%%%%%%%%%%%%

\makeatletter
 \renewcommand{\@makefntext}[1]{#1}
 \makeatother
 \footnotetext{
 %This work is partially supported by ANR Project OSQPI (ANR-11-BS01-0008).\\
 2020 {\it Mathematics subject classification:}
 Primary 47A20, 47D03, 46L51.%; Secondary 46M35
%47A20   	Dilations, extensions, compressions
% 	47D07   	Markov semigroups and applications to diffusion processes
%47D03   	Groups and semigroups of linear operators
%46L57   	Derivations, dissipations and positive semigroups in $C^*$-algebras
%  46M35       Abstract interpolation of topological vector spaces [See also 46B70]
% 46L07       Operator spaces and completely bounded maps [See also 47L25]
% 43A22 Homomorphisms and multipliers of function spaces on groups, semigroups, etc.
% 43A15 Lp-spaces and other function spaces on groups, semigroups, etc.
%  46L51  Noncommutative measure and integration
\\
{\it Key words and phrases}: semigroups, dilations, Fourier multipliers, completely positive maps, crossed products.}%noncommutative $L^p$-spaces, Fourier, Schur multipliersFourier
%multipliers, Schur multipliers,

\tableofcontents

%%%%%%%%%%%%%%%%%%%%%%%%%%%%%%%%%%%%%%%%%%%%%%%%%%%%%%%%%%%%%%%%%%%%%%%%%%%%%%%%%%%%%%%%%%%%%%%%%%%%%%%%%%%%%%%%%%%%%%%%%%%%%%%%%%%%%%%%%%%%%%%%%%%%%%
\section{Introduction}
\label{sec:Introduction}
%%%%%%%%%%%%%%%%%%%%%%%%%%%%%%%%%%%%%%%%%%%%%%%%%%%%%%%%%%%%%%%%%%%%%%%%%%%%%%%%%%%%%%%%%%%%%%%%%%%%%%%%%%%%%%%%%%%%%%%%%%%%%%%%%%%%%%%%%%%%%%%%%%%%%%

Fourier multipliers are the most used operators in (noncommutative) harmonic analysis. In particular, the study of semigroups of these operators is a central issue. In the noncommutative setting, these are connected to a large number of topics, as approximation properties \cite{BrO08} \cite{Haa7879} or noncommutative geometry \cite{ArK22a}.

In this paper, we will focus on weak* continuous semigroup $(T_t)_{t \geq 0}$ of selfadjoint unital completely positive Fourier multipliers acting on the group von Neumann algebra $\VN(G)$ of an arbitrary locally compact group $G$. We construct a Markov dilation in the sense of \cite[p. 637]{JM10} for these semigroups, which allows us to state our main result Theorem \ref{Th-Markov-dilation-Fourier}.

This construction makes it possible to use martingale theory and obtain results in analysis with probability tools. Indeed, Junge and Mei studied in \cite{JM12} several $\BMO$-spaces associated to a Markov semigroup on a semifinite von Neumann algebra (i.e. a noncommutative $\L^\infty$-space). In particular, the authors obtained interpolation results. Their approach relies on Markov dilations of semigroups. Note that some extensions of these results were generalized to the $\sigma$-finite von Neumann algebras  in the paper \cite{Cas19}. Other applications of these dilations include some estimates  related to Riesz transforms \cite{JM12} and connected to the curvature assumption $\Gamma^2 \geq 0$, boundedness of the $\H^\infty$ functional calculus of the (negative) generator of the semigroup \cite{JiW17} \cite{FMS19} with applications to maximal inequalities and to ergodic theory.

Observe that Markov dilations are really different from the kind of dilation constructed in the papers \cite{Arh13}, \cite{Arh19} and \cite{Arh20} in the spirit of classical Fendler's isometric dilation theorem \cite{Fen97} (see also \cite{ALM14} \cite{AFM17} for related works). Furthermore, it is remarkable that the algebra associated to the dilation of this paper is the same that the algebra associated to the dilation constructed in \cite{Arh20}. However, we were unable to find a direct connection between the construction of this paper and the dilation described in \cite{Arh13}.

Note that Markov dilations of weak* continuous semigroups of selfadjoint unital completely positive measurable Schur multipliers on $\sigma$-finite measure spaces were explicitely constructed in \cite{Arh21}. In \cite{Cas19}, it is described how to obtain Markov dilations of radial semigroups on free Araki-Woods factors. The paper \cite{CJSZ20} contains a construction of Markov dilations of semigroups of double operator integrals. Finally, an unpublished paper \cite{JRS} of Junge, Ricard and Shlyakhtenko describes a construction of a Markov dilation for any weak* continuous semigroup of selfadjoint unital completely positive maps on a finite von Neumann algebra (see also the recent \cite{Wir22} for a related construction in the type III case). In the three preceding papers, the construction relies on the use of ultraproduct methods with sharp contrast with the ones of \cite{Arh21} and of this paper. The construction of concrete dilations and Markov dilations remains very important, especially for Ricci curvature bounds, see e.g. \cite{BGJ20}. 

In the discrete setting of a semigroup $(T^n)_{n \in \mathbb{N}}$ associated to an operator $T$ acting on a von Neumann algebra, a notion of Markov dilation was introduced in \cite{AnD06} and thoroughly investigating in \cite{HaM11}. Examples of such dilations are also provided in \cite{Ric08}.

\paragraph{Structure of the paper} The paper is organized as follows. The next section \ref{sec:preliminaries} gives background on probabilities, Fourier multipliers and crossed products. In Section \ref{sec-Markov}, we state and prove our dilation result and we describe a reversed result.

%give useful observations on measurable positive definite kernels and negative definite kernels. In Section \ref{sec-description}, we give complements on measurable Schur multipliers. In particular, we answer a question Steen, Todorov and Turowska in Remark \ref{Remark-answer}. We also give a precise description of weak* continuous semigroups of unital selfadjoint completely positive measurable Schur multipliers on some suitable measure spaces. Section \ref{sec-dilation-Schur} gives a proof of our main result of dilation of these semigroups. Next in Section \ref{sec-Markov}, we also construct a Markov dilation for these semigroups. In the last section \ref{sec-Applications}, we describe some applications of our results to functional calculus and interpolation.

%%%%%%%%%%%%%%%%%%%%%%%%%%%%%%%%%%%%%%%%%%%%%%%%%%%%%%%%%%%%%%%%%%%%%%%%%%%%%%%%%%%%%%%%%%%%%%%%%%%%%%%%%%%%%%%%%%%%%%%%%%%%%%%%%%%%%%%%%%%%%%%%%%%%%%
\section{Preliminaries}
\label{sec:preliminaries}
%%%%%%%%%%%%%%%%%%%%%%%%%%%%%%%%%%%%%%%%%%%%%%%%%%%%%%%%%%%%%%%%%%%%%%%%%%%%%%%%%%%%%%%%%%%%%%%%%%%%%%%%%%%%%%%%%%%%%%%%%%%%%%%%%%%%%%%%%%%%%%%%%%%%%%

%\cite[page 15]{Sim1}\cite[Definition 2.1]{NVW1}
\paragraph{Isonormal processes} Let $H$ be a real Hilbert space. An $H$-isonormal process on a probability space $(\Omega,\mu)$ \cite[Definition 1.1.1]{Nua06} \cite[Definition 6.5]{Nee08} is a linear mapping $\W \co H \to \L^0(\Omega)$ from $H$ into the space $\L^0(\Omega)$ of measurable functions on $\Omega$ with the following properties:
\begin{flalign}
& \label{isonormal-gaussian} \text{for any $h \in H$ the random variable $\W(h)$ is a centered real Gaussian,} \\
&\label{esperance-isonormal} \text{for any } h_1, h_2 \in H \text{ we have } \E\big(\W(h_1) \W(h_2)\big)= \langle h_1, h_2\rangle_H, \\
&\label{density-isonormal} \text{the linear span of the products } \W(h_1)\W(h_2)\cdots \W(h_m), 
\text{ with } m \geq 0 \text{ and } h_1,\ldots, h_m \\
&\nonumber\text{in }H, \text{ is dense in the real Hilbert space $\L^2_\R(\Omega)$.}\end{flalign}  
Here we make the convention that the empty product, corresponding to $m=0$ in \eqref{density-isonormal}, is the constant function $1$. Moreover, $\E$ is used to denote expected value.

If $(e_i)_{i \in I}$ is an orthonormal basis of $H$ and if $(\gamma_i)_{i \in I}$ is a family of independent standard Gaussian random variables on a probability space $\Omega$ then for any $h \in H$, the family $(\gamma_i \langle h,e_i\rangle_H)_{i \in I}$ is summable in $\L^2(\Omega)$ and
\begin{equation}
\label{Concrete-W}
\W(h)
\ov{\mathrm{def}}{=} \sum_{i \in I} \gamma_i \langle h,e_i\rangle_H, \quad h \in H
\end{equation}
defines an $H$-isonormal process.

Recall that the span of elements $\e^{\i\W(h)}$ where $h \in H$ is weak* dense in $\L^\infty(\Omega)$ by \cite[Remark 2.15 p. 22]{Jan97}. It is easy to prove that we can replace $H$ by a dense subset of $H$ with Lemma \ref{Lemma-semigroup-continuous} below. 
%Each product $\W(h_1)\W(h_2) \cdots \W(h_m)$ belongs to $\L^p_\R(\widehat{\Omega})$ for any $1 \leq p<\infty$ and their linear span is also dense in $\L^p_\R(\Omega)$. 
Using \cite[Proposition E.2.2]{HvNVW18} with $t$ instead of $\xi$ and by observing by \eqref{esperance-isonormal} that the variance $\E(\W(h)^2)$ of the Gaussian variable $\W(h)$ is equal to $\norm{h}_H^2$, we see that
\begin{equation}
\label{Esperance-exponentielle-complexe}
\E\big(\e^{\i t\W(h)}\big)
=\e^{-\frac{t^2}{2} \norm{h}_H^2}, \quad t \in \R, h \in H.
\end{equation}

If $u \co H \to H$ is a contraction, we denote by $\Gamma^\infty(u) \co \L^\infty(\Omega) \to \L^\infty(\Omega)$ the (symmetric) second quantization of $u$ acting on the \textit{complex} Banach space $\L^\infty(\Omega)$. Recall that the map $\Gamma^\infty(u) \co \L^\infty(\Omega) \to \L^\infty(\Omega)$ preserves the integral\footnote{\thefootnote. That means that for any $f \in \L^\infty(\Omega)$ we have $\int_{\Omega} \Gamma^\infty(u)f \d\mu=\int_{\Omega} f \d\mu$.}. If $u$ is a surjective isometry we have
\begin{equation}
\label{SQ1}
\Gamma^\infty(u) \big(\e^{\i\W(h)}\big)
=\e^{\i\W(u(h))}, \quad h \in H
\end{equation}
and $\Gamma^\infty(u) \co \L^\infty(\Omega) \to \L^\infty(\Omega)$ is a $*$-automorphism of the von Neumann algebra $\L^\infty(\Omega)$. If $P \co H \to H$ is an ortogonal projection on a closed subspace $K$, the operator $\Gamma^\infty(u) \co \L^\infty(\Omega) \to \L^\infty(\Omega)$ is a faithful normal conditional expectation with range $\L^\infty(\Omega, \scr{F})$ where $\scr{F}$ is the $\sigma$-algebra generated by the random variables $\W(h)$ for $h \in K$, see \cite[Theorem 4.9 p.~46]{Jan97}. 

Furthermore, the second quantization functor $\Gamma$ satisfies the following elementary result \cite[Lemma 2.1]{Arh20}. In the first part, we suppose that the construction\footnote{\thefootnote. The existence of a proof of Lemma \ref{Lemma-semigroup-continuous} without \eqref{Concrete-W} is unclear.} is given by the concrete representation \eqref{Concrete-W}.   

\begin{lemma}
\label{Lemma-semigroup-continuous} 
\begin{enumerate}
	\item If $\L^\infty(\Omega)$ is equipped with the weak* topology then the map $H \to \L^\infty(\Omega)$, $h \mapsto \e^{\i\W(h)}$ is continuous.
		%\item Suppose $1 \leq p <\infty$. LetIf $\B(H)$ and if $\B(\L^p(\Omega_0))$ are equipped with the strong operator topology then the map $\B(H) \mapsto \B(\L^p(\Omega_0))$, $u \to \Gamma_p(u)$ is continuous. 
	%\item If $\B(H)$ is equipped with the strong topology and if $\B(\L^\infty(\Omega_0))$ is equipped with the point weak* topology then the map $\B(H) \mapsto \B(\L^\infty(\Omega_0))$, $u \to \Gamma^\infty(u)$ is continuous. 
	\item If $\pi \co G \to \B(H)$ is a strongly continuous orthogonal representation of a  locally compact group, then $G \to \B(\L^\infty(\Omega))$, $s \mapsto \Gamma^\infty(\pi_s)$ is a weak* continuous\footnote{\thefootnote. That means that $\B(\L^\infty(\Omega))$ is equipped with the point weak* topology.} representation on the Banach space $\L^\infty(\Omega)$.
		%\item If $u \co G \to \B(H)$ is a strongly continuous representation of a locally compact group $G$, then $G \to \Aut(\L^\infty(\Omega_0))$, $s \mapsto \Gamma^\infty(u_s)$ is a continuous action on the von Neumann algebra $\L^\infty(\Omega_0)$.
\end{enumerate}
\end{lemma}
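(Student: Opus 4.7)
The plan is to prove (1) by a direct $\L^2$-estimate exploiting the linearity of $\W$ and the variance formula \eqref{esperance-isonormal}, then deduce (2) from (1), the identity \eqref{SQ1}, and an extension from the weak* dense span $E \ov{\mathrm{def}}{=} \vect\{\e^{\i\W(h)} : h \in H\}$ to all of $\L^\infty(\Omega)$.

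For part (1), fix $h_0 \in H$. Using the pointwise estimate $|\e^{\i a} - \e^{\i b}| \leq |a - b|$, the linearity of $\W$, and \eqref{esperance-isonormal}, I get
\begin{equation*}
\bnorm{\e^{\i\W(h)} - \e^{\i\W(h_0)}}_{\L^2(\Omega)}^2
\leq \E\bigl(|\W(h - h_0)|^2\bigr)
= \norm{h - h_0}_H^2,
\end{equation*}
so $\e^{\i\W(h)} \to \e^{\i\W(h_0)}$ in $\L^2(\Omega)$ as $h \to h_0$, while $|\e^{\i\W(h)}| = 1$ yields a uniform $\L^\infty$-bound. To conclude weak* continuity I test against $g \in \L^1(\Omega)$: given $\epsi > 0$, pick $g_\epsi \in \L^2(\Omega) \cap \L^1(\Omega)$ with $\norm{g - g_\epsi}_{\L^1} < \epsi$ and split
\begin{equation*}
\Bigl|\int_\Omega (\e^{\i\W(h)} - \e^{\i\W(h_0)}) g \d\mu\Bigr|
\leq \bnorm{\e^{\i\W(h)} - \e^{\i\W(h_0)}}_{\L^2} \norm{g_\epsi}_{\L^2} + 2\epsi,
\end{equation*}
which is arbitrarily small by sending $h \to h_0$ and then $\epsi \to 0$.

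For part (2), formula \eqref{SQ1} reads $\Gamma^\infty(\pi_s)(\e^{\i\W(h)}) = \e^{\i\W(\pi_s h)}$, so combining it with strong continuity of $\pi$ and part (1) yields weak* continuity of $s \mapsto \Gamma^\infty(\pi_s)(f)$ for $f \in E$ by linearity. To pass to arbitrary $f \in \L^\infty(\Omega)$, I first establish $\L^2$-strong continuity of $s \mapsto \Gamma^2(\pi_s)(f)$: the same Gaussian estimate applied to $\e^{\i\W(\pi_s h)} - \e^{\i\W(\pi_{s_0} h)}$ gives the bound $\norm{\pi_s h - \pi_{s_0} h}_H \to 0$; then since $E$ is norm dense in $\L^2(\Omega)$ (a standard consequence of \eqref{density-isonormal} via Taylor expansion of the exponentials) and each $\Gamma^2(\pi_s)$ is unitary, a $3\epsi$-argument promotes this to $\L^2$-strong continuity on all of $\L^2(\Omega)$. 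Finally, the same $\L^1$-splitting as in part (1), combined with the uniform bound $\norm{\Gamma^\infty(\pi_s)(f)}_{\L^\infty} = \norm{f}_{\L^\infty}$ (since $\Gamma^\infty(\pi_s)$ is a $*$-automorphism), upgrades $\L^2$-strong continuity to weak* $\L^\infty$ continuity. The main obstacle I expect is precisely this final extension step: weak* density of $E$ in $\L^\infty$ does not by itself propagate pointwise weak* continuity from $E$ to $\L^\infty$, because weak* approximation does not uniformly control the $\L^1$-pairing in $s$; routing through $\L^2(\Omega)$, which interpolates between $\L^\infty$ and $\L^1$ on the probability space $\Omega$, is what unlocks the extension.
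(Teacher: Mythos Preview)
Your proof is correct. The paper does not actually prove this lemma; it cites \cite[Lemma 2.1]{Arh20} and adds the caveat that ``in the first part, we suppose that the construction is given by the concrete representation \eqref{Concrete-W}'', with a footnote stating that ``the existence of a proof of Lemma \ref{Lemma-semigroup-continuous} without \eqref{Concrete-W} is unclear.'' Your argument for part (1) answers that implicit question: the pointwise bound $|\e^{\i a}-\e^{\i b}|\leq |a-b|$ together with linearity of $\W$ and \eqref{esperance-isonormal} gives $\L^2$-convergence directly from the abstract axioms of an isonormal process, with no need for the concrete series \eqref{Concrete-W}. The upgrade from $\L^2$-convergence plus uniform $\L^\infty$-bound to weak* convergence is standard and correctly executed.

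For part (2), your route through $\L^2(\Omega)$ is exactly the right idea and the argument is sound. One small point: your justification that $E$ is norm-dense in $\L^2(\Omega)$ ``via Taylor expansion of the exponentials'' is phrased backwards---Taylor expansion would express exponentials as limits of polynomials, not the other way around. A cleaner justification, already available in the paper, is that the weak* density of $E$ in $\L^\infty(\Omega)$ (stated just after \eqref{Concrete-W}) means any $g\in\L^2(\Omega)\subset\L^1(\Omega)$ orthogonal to $E$ must vanish, so $E^\perp=\{0\}$ in $\L^2(\Omega)$. With that adjustment, your extension from $E$ to $\L^\infty(\Omega)$ via the unitarity of $\Gamma^2(\pi_s)$ and the $\L^1$-splitting is complete.
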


Let $H$ be a real Hilbert space. Following \cite[Definition 2.2]{NVW15} and \cite[Definition 6.11]{Nee08}, we say that an $\L^2_\R(\R^+,H)$-isonormal process $\W$ is an $H$-cylindrical Brownian motion. In this case, for any $t \geq 0$ and any $h \in H$, we let
\begin{equation}
\label{Def-Wt-h}
\W_t(h)\ov{\mathrm{def}}{=}\W\big(1_{[0,t]} \ot h\big).	
\end{equation}
%We also use this notation for $t \in \R$ for a $\L^2_\R(\R,H)$-isonormal process $\W$ where in the case $t<0$, we put $\W_t(h) \ov{\mathrm{def}}{=} \W(-1_{[0,-t]} \ot h)$.
We introduce the filtration $(\scr{F}_t)_{t \geq 0}$ defined by 
\begin{equation}
\label{Filtration-H-cylindrical}
\scr{F}_t\ov{\mathrm{def}}{=} \sigma\big(\W_r(h) : r \in [0,t], h \in H\big),
\end{equation} 
that is the $\sigma$-algebra generated by the random variables $\W_r(h)$ for $r \in [0,t]$ and $h \in H$.

By \cite[p.~77]{Nee08}, for any fixed $h \in H$, the family $(\W_t(h))_{t \geq 0}$ is a Brownian motion. This means by essentially \cite[Definition 6.2]{Nee08} that
\begin{flalign}
& \label{isonormal-almost} \text{$\W_0(h) = 0$ almost surely,} \\
&\label{difference-1} \text{$\W_t(h)-\W_u(h)$ is Gaussian with variance $(t-u)\norm{h}_H^2$ for any $0 \leq u \leq t$,} \\
&\label{difference-2} \text{$\W_t(h)-\W_u(h)$ is independent of $\{\W_r(h) :  r \in [0,u] \}$ for any $0 \leq u \leq t$}.  
\end{flalign}
Indeed by \cite[p.~163]{Nee08}, 
\begin{flalign}
\label{difference-3}
&\text{the increment $\W_t(h)-\W_u(h)$ is independent of the $\sigma$-algebra $\scr{F}_u$}.
\end{flalign}
Moreover, by \cite[p. 163]{Nee08} the family $(\W_t(h))_{t \geq 0}$ is a martingale with respect to $(\scr{F}_t)_{t \geq 0}$. In particular, the random variable $\W_t(h)$ is $\scr{F}_t$-measurable. If $0 \leq u \leq t$, note that 
$$
\norm{1_{]u,t]} \ot h}_{\L^2_\R(\R^+,H)}^2
= \bnorm{1_{]u,t]}}_{\L^2_\R(\R^+)}^2 \norm{h}_H^2
=(t-u) \norm{h}_H^2.
$$ 
Using \eqref{Esperance-exponentielle-complexe} together with the previous computation, we obtain
\begin{equation}
\label{Esperance-exponentielle-complexe-2}
\E\big(\e^{\i \W(1_{]u,t]} \ot h)}\big)
=\e^{-\frac{t-u}{2} \norm{h}_H^2}, \quad 0 \leq u \leq t,\ h \in H.
\end{equation}

\paragraph{Probabilities}
%The following is \cite[Lemma 11.18]{Nee08}.
%
%\begin{lemma}
%\label{Lemma-indep}
%A random variable $X$ is independent of a family $(Y_j)_{j \in J}$ of random variables if and only if $X$ is independent of the $\sigma$-algebra generated by $(Y_j)_{j \in J}$.
%\end{lemma}
Let $\Omega$ be a probability space. If $f \in \L^1(\Omega)$ is independent of the sub-$\sigma$-algebra $\scr{F}$, then by \cite[Proposition 2.6.35]{HvNVW16} its conditional expectation $\E_\scr{F}(f)$ with respect to $\scr{F}$ is given by the constant function:
\begin{equation}
\label{Hy-2.6.35}
\E_\scr{F}(f) 
= \E(f).
\end{equation}
If $g \in \L^\infty(\Omega)$ is $\scr{F}$-measurable and $f \in \L^1(\Omega)$, we have by \cite[Proposition 2.6.31]{HvNVW16}
\begin{equation}
\label{Bimodule}
\E_\scr{F}(gf)
=g\,\E_\scr{F}(f).
\end{equation}
\paragraph{Group von Neumann algebras} 
Let $G$ be a locally compact group equipped with a fixed left Haar measure $\mu_G$. 
%For a complex measurable function $g \co G \to \C$, we write $\lambda(g)$ for the left convolution operator (in general unbounded) by $g$ on $\L^2(G)$. This means that the domain of $\lambda(g)$ consists of all $f$ of $\L^2(G)$ for which the integral $(g*f)(t)=\int_G g(s)f(s^{-1}t) \d\mu_G(s)$ exists for almost all $t \in G$ and for which the resulting function $g*f$ belongs to $\L^2(G)$, and for such $f$, we let $\lambda(g)f=g*f$. Finally, by \cite[Corollary 20.14]{HeR1}, each $g \in \L^1(G)$ induces a bounded operator $\lambda(g) \co \L^2(G) \to \L^2(G)$.
%Let $\VN(G)$ be the von Neumann algebra of $G$ generated by the set $\big\{\lambda(g) : g \in \L^1(G)\big\}$. It is called t
The group von Neumann algebra of $G$ is the von Neumann algebra generated by the set $\{\lambda_s : s \in G\}$ where 
%\begin{equation*}
%\label{Left-translation}
$\lambda_s  \co \L^2(G) \to \L^2(G), f \mapsto (t \mapsto f(s^{-1}t))$ 
%\end{equation*}
is the left translation by $s$.

%We refer to \cite{Haa2}, \cite{Haa3}, \cite{Str1}, \cite{Sun} and \cite{Tak2}.
\paragraph{Crossed products} 
We refer to \cite{Str81} and \cite{Tak03} for more information on crossed products. Let $M$ be a von Neumann algebra acting on a Hilbert space $H$. Let $G$ be a locally compact group equipped with some left Haar measure $\mu_G$. Let $\alpha \co G \to M$ be a representation of $G$ on $M$ which is weak* continuous, i.e. for any $x \in M$ and any $y \in M_*$, the map $G \to M$, $s \mapsto \langle \alpha_{s}(x), y \rangle_{M,M_*}$ is continuous. For any $x \in M$, we define the operators $\pi(x)\co \L^2(G,H) \to \L^2(G,H)$ \cite[(2) p. 263]{Str81} by
\begin{equation}
\label{}
\big(\pi(x) \xi \big)(s)
\ov{\mathrm{def}}{=}\alpha^{-1}_s(x) \xi(s),\quad
\quad  \xi \in \L^2(G, H), s \in G.
\end{equation}
%We will use the notation $\pi \ov{\mathrm{def}}{=} \pi_\alpha$ to shorten the presentation. 
These operators satisfy the following commutation relation \cite[(2) p. 292]{Str81}:
\begin{equation}
\label{commutation-rules}
(\lambda_s \ot \Id_H) \pi(x) (\lambda_s \ot \Id_H)^*
= \pi(\alpha_{s}(x)),
\quad x \in M, s \in G.
\end{equation}
Recall that the crossed product of $M$ and $G$ with respect to $\alpha$ is the von Neumann algebra 
$$
M \rtimes_\alpha G\ov{\mathrm{def}}{=} (\pi(M) \cup \{\lambda_s \ot \Id_{H}: s \in G\})''
$$ 
on the Hilbert space $\L^2(G,H)$ generated by the operators $\pi(x)$ and $\lambda_s \ot \Id_{H}$ where $x \in M$ and $s \in G$. By \cite[p. 263]{Str81}, $\pi$ is a normal injective $*$-homomorphism from $M$ into $M \rtimes_\alpha G$ (hence $\sigma$-strong* continuous). 

We denote by $\mathcal{K}(G,M)$ the space of $\sigma$-strong* continuous functions $f \co G \to M$, $s \mapsto f_s$ with compact support. If $f \in \mathcal{K}(G,M)$ then $f(G)$ is a $\sigma$-strong* compact subset of $M$, hence by \cite[Proposition 2.7 d)]{Osb14} a $\sigma$-strong* bounded subset of $M$. Hence it is a strong bounded subset and finally a norm-bounded subset of $M$ by the principle of uniform boundedness \cite[Theorem 1.8.9]{KaR97}. Note that by \cite[Proposition p. 186]{Str81} and \cite[p. 41]{Str81}, the bounded function $G \to M$, $s \mapsto \lambda_s \ot \Id_H$ is $\sigma$-strong* continuous and the norm-bounded function $s \mapsto \pi(f_s)$ is also $\sigma$-strong* continuous. Recall that the product of $M$ is $\sigma$-strong* continuous on bounded subsets by \cite[Proposition 2.4.5]{BrR87}. We infer\footnote{\thefootnote. In the book \cite{Str81}, the author considers weak* continuous functions, it is problematic since the product of $M$ is not weak* continuous even on bounded sets by \cite[Exercise 5.7.9]{KaR97} (indeed this latter fact is equivalent to the weak continuity of the product on bounded sets).} that the function $G \to M \rtimes_\alpha G$, $s \mapsto \pi(f_s)(\lambda_s \ot \Id_H)$ is $\sigma$-strong* continuous with compact support. 
So, by \cite[Lemma 2.2]{Arh20} and \cite[Corollary 2, III p. 38]{Bou04} we can define the element $\int_G f_s \rtimes \lambda_s \d\mu_G(s)$ of the crossed product $M \rtimes_\alpha G$ by 
\begin{equation}
\label{def-integrale-crossed}
\int_G f_s \rtimes \lambda_s \d\mu_G(s)
\ov{\mathrm{def}}{=} \int_G \pi(f_s)(\lambda_s \ot \Id_H) \d\mu_G(s).
\end{equation}

The following is a particular case\footnote{\thefootnote. The function $u \co G \to \mathrm{U}(M)$ is a $\alpha$-1-cocycle.} of \cite[Proposition 3.5]{Tak73} and its proof, see also \cite[Theorem 1.7 (ii) p. 241]{Tak03}. Note that the von Neumann algebra $M$ is \textit{abelian} in the statement. With \cite[Proposition 2, III p. 35]{Bou04}, the last part is an easy computation left to the reader.

\begin{prop}
\label{Prop-Takesaki}
Let $M$ be an abelian von Neumann algebra acting on a Hilbert space $H$ equipped with a weak* continuous action $\alpha$ of a locally compact group $G$. Suppose that there exists a strongly continuous function $u \co G \to \mathrm{U}(M)$ such that
\begin{equation}
\label{equation-unitaries}
u(sr)
=u(s)\alpha_s(u(r)),\quad s,r \in G. 
\end{equation}
Then $V \co \L^2(G,H) \to \L^2(G,H)$, $\xi \mapsto (s \mapsto u(s^{-1})(\xi(s)))$ is a unitary and we have a $*$-isomorphism $U \co M \rtimes_{\alpha} G \to  M \rtimes_{\alpha} G$, $x \mapsto VxV^*$ such that
\begin{equation*}
U(\lambda_s \ot \Id_H)
=\pi(u(s)^*)(\lambda_s \ot \Id_H)
\quad \text{and} \quad 
U(\pi(x))
=\pi(x), \quad s \in G, x \in M.
\end{equation*}
Moreover, for any $f \in \mathcal{K}(G,M)$, we have
\begin{equation}
\label{quoi}
U\bigg(\int_G f_s \rtimes \lambda_s \d\mu_G(s) \bigg)
=\int_G u(s)^* f_s  \rtimes \lambda_s \d\mu_G(s).
\end{equation}
\end{prop}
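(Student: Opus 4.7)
The plan is to verify each assertion by direct pointwise computation on $\L^2(G,H)$, with the cocycle relation and the commutativity of $M$ playing the two crucial roles.

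First I would check that $V$ is unitary. Since $u(s^{-1}) \in \U(M)$, we have $\norm{u(s^{-1})\xi(s)}_H = \norm{\xi(s)}_H$ pointwise, and the strong continuity of $u$ combined with the strong measurability of $\xi$ ensures that $s \mapsto u(s^{-1})\xi(s)$ remains strongly measurable; hence $V$ is isometric. The candidate $\xi \mapsto (s \mapsto u(s^{-1})^* \xi(s))$ provides a two-sided inverse, so $V$ is unitary and $U \ov{\mathrm{def}}{=} V \cdot V^*$ is a $*$-automorphism of $\B(\L^2(G,H))$.

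Next I would compute $U$ on the two families of generators of $M \rtimes_\alpha G$. For $\pi(x)$ one finds directly
\[
(V\pi(x)V^*\xi)(r) = u(r^{-1}) \alpha_{r^{-1}}(x) u(r^{-1})^* \xi(r),
\]
and since $M$ is abelian and both $u(r^{-1})$ and $\alpha_{r^{-1}}(x)$ lie in $M$, this collapses to $\alpha_{r^{-1}}(x)\xi(r) = (\pi(x)\xi)(r)$. For $\lambda_s \ot \Id_H$ the same procedure yields
\[
(V(\lambda_s \ot \Id_H)V^*\xi)(r) = u(r^{-1}) u(r^{-1}s)^* \xi(s^{-1}r).
\]
Applying the cocycle identity $u(r^{-1}s) = u(r^{-1})\alpha_{r^{-1}}(u(s))$, taking adjoints, and commuting $u(r^{-1})$ past $\alpha_{r^{-1}}(u(s))^*$ inside the abelian algebra $M$, the expression simplifies to $\alpha_{r^{-1}}(u(s)^*)\xi(s^{-1}r)$, which is exactly $(\pi(u(s)^*)(\lambda_s \ot \Id_H)\xi)(r)$. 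This is the main computational step.

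With these two formulas in hand, $U$ sends the generating set $\pi(M) \cup \{\lambda_s \ot \Id_H : s \in G\}$ into $M \rtimes_\alpha G$; being weak* continuous (as conjugation by a unitary) it maps $M \rtimes_\alpha G$ into itself. Surjectivity onto $M \rtimes_\alpha G$ follows by observing that $\pi(x) = U(\pi(x))$ and that $\lambda_s \ot \Id_H = U\big(\pi(u(s))(\lambda_s \ot \Id_H)\big)$, using $u(s)u(s)^* = 1$. Hence $U$ restricts to a $*$-isomorphism of $M \rtimes_\alpha G$ with the prescribed action on generators.

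Finally, for the integral formula, I would use the weak* continuity of $U$ to commute it with the integral defining $\int_G f_s \rtimes \lambda_s \d\mu_G(s)$ via \cite[Proposition 2, III p. 35]{Bou04}; then apply the generator formulas to get
\[
U\big(\pi(f_s)(\lambda_s \ot \Id_H)\big) = \pi(f_s)\pi(u(s)^*)(\lambda_s \ot \Id_H) = \pi(u(s)^* f_s)(\lambda_s \ot \Id_H),
\]
where the last step uses once more that $M$ is abelian to rewrite $f_s u(s)^*$ as $u(s)^* f_s$. Integrating yields the claimed identity \eqref{quoi}. The main obstacle is essentially the bookkeeping in the cocycle calculation for $V(\lambda_s \ot \Id_H)V^*$; once that is done cleanly, everything else is formal.
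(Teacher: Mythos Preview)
Your proposal is correct and provides exactly the direct verification that the paper delegates to \cite[Proposition 3.5]{Tak73} (for the action on generators and the automorphism property) together with the ``easy computation left to the reader'' via \cite[Proposition 2, III p.~35]{Bou04} (for the integral formula). There is no alternative approach here: the paper simply does not write out a proof, and your computations on $\pi(x)$ and $\lambda_s\ot\Id_H$, the cocycle manipulation, and the passage of $U$ through the integral are precisely what that citation and that remark are pointing to.
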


Now, we suppose that the von Neumann algebra $M$ is \textit{finite} and equipped with a normal finite faithful trace $\tau$. By \cite[Lemma 3.3]{Haa78} \cite[Theorem p. 301]{Str81} \cite[Theorem 1.17 p. 249]{Tak03}, there exists a unique normal semifinite faithful weight $\varphi_{\rtimes}$ on the crossed product $M \rtimes_{\alpha} G$ which satisfies for any $f,g \in \mathcal{K}(G,M)$ the  fundamental ``noncommutative Plancherel formula''
\begin{equation}
\label{Plancherel-Non-com}
\varphi_{\rtimes}\bigg(\bigg( \int_G f_s \rtimes \lambda_s \d\mu_G(s)\bigg)^*\bigg( \int_G g_s \rtimes \lambda_s \d\mu_G(s)\bigg)\bigg)
=\int_G \tau(f_s^*g_s) \d\mu_G(s)
\end{equation}
and the relations $\sigma_t^{\varphi_{\rtimes}}(\pi(x))=\pi(x)$ where $x \in M$ and $t \in \R$
and 
\begin{equation*}
%\label{modular-group-crossed-2}
\sigma_t^{\varphi_{\rtimes}}(\lambda_s \ot \Id_H)
=\Delta_G^{\i t}(s)(\lambda_s \ot \Id_H)\pi([\D(\tau \circ \alpha_s):\D\tau]_t), \quad s \in G,t \in \R.
\end{equation*}
If $M=\C$, we recover the Plancherel weight $\varphi_G$ on the group von Neumann algebra $\VN(G)$ \cite[p. 67]{Tak03}. If each $\alpha_s \co M \to M$ is trace preserving, we obtain in particular 
\begin{equation*}
\label{modular-group-crossed-prime}
\sigma_t^{\varphi_{\rtimes}}(\lambda_s \ot \Id_H)
=\Delta_G^{\i t}(s)(\lambda_s \ot \Id_H), \quad s \in G,t \in \R.
\end{equation*}
Using \cite[Proposition 2, III p. 35]{Bou04}, we deduce that
\begin{equation}
\label{modular-group-crossed}
\sigma_t^{\varphi_{\rtimes}}\bigg(\int_G f_s \rtimes \lambda_s \d\mu_G(s)\bigg)
=\int_G \Delta_G^{\i t}(s)f_s \rtimes \lambda_s \d\mu_G(s), \quad f \in \mathcal{K}(G,M), t \in \R.
\end{equation}

By \cite[Theorem 4.1]{HJX10}, %(see also \cite[Proposition 3.13]{Dae78})
 we have the following result. Note that the proof of \cite[Theorem 4.1]{HJX10} does \textit{not} use the fact that $G$ is abelian. The second part is an obvious observation left to the reader.

\begin{lemma}
\label{Lemma-crossed}
Let $G$ be a locally compact group and $\alpha \co G \to \Aut(M)$ be a weak* continuous action on a von Neumann algebra $M$ equipped with a normal semifinite faithful weight. Let $\E \co M \to M$ be a weight preserving faithful normal conditional expectation such that $
\E\alpha_s=\alpha_s\E$ for any $s \in G$.
\begin{enumerate}
\item There exists a weight preserving faithful normal conditional expectation $
\E \rtimes \Id_{\VN(G)} \co M \rtimes_{\alpha} G \to M \rtimes_{\alpha} G$ 
such that for any $s \in G$ and any $x \in M$
\begin{equation*}
%\label{def-iso-croise}
\big(\E \rtimes \Id_{\VN(G)}\big)(\pi(x))=\pi(\E(x)),
\quad 
\big(\E \rtimes \Id_{\VN(G)}\big)(\lambda_s \ot \Id_{H})=\lambda_s \ot \Id_{H}.
\end{equation*}
\item For any function $f \in \mathcal{K}(G,M)$, we have 
\begin{equation}
\label{crossed-2}
\big(\E \rtimes \Id_{\VN(G)}\big)\bigg(\int_G f_s \rtimes \lambda_s \d\mu_G(s) \bigg)
	=\int_G \E(f_s) \rtimes \lambda_s \d\mu_G(s).
\end{equation}
	%\item Suppose that $\tau_{M_1}$ is finite and normalized. If $\chi$ is trace preserving then $\chi \rtimes \Id_{\VN(G)}$ is also trace preserving.
\end{enumerate}
\end{lemma}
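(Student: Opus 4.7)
My plan is to construct $\E \rtimes \Id_{\VN(G)}$ as the restriction to $M \rtimes_\alpha G$ of the normal faithful conditional expectation $\E \otvn \Id_{\B(\L^2(G))}$ acting on the larger von Neumann algebra $M \otvn \B(\L^2(G))$, and then to derive \eqref{crossed-2} from its normality together with the bimodule property of a conditional expectation.

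For the first assertion I will first note that the hypothesis $\E \alpha_s = \alpha_s \E$ makes the range $N \ov{\mathrm{def}}{=} \E(M)$ into an $\alpha$-invariant von Neumann subalgebra of $M$. Thus $\alpha$ restricts to a weak* continuous action $\alpha|_N$ on $N$, giving a canonical inclusion $N \rtimes_{\alpha|_N} G \subset M \rtimes_\alpha G$ acting on the same Hilbert space $\L^2(G,H)$. The only point to verify is that the slice map $\E \otvn \Id_{\B(\L^2(G))}$ sends $M \rtimes_\alpha G$ into $N \rtimes_{\alpha|_N} G$. For the generator $\lambda_s \ot \Id_H \in 1 \otvn \VN(G)$ this is immediate. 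For $\pi(x)$, which under the natural identification $M \otvn \L^\infty(G) \cong \L^\infty(G,M)$ corresponds to the $M$-valued function $s \mapsto \alpha^{-1}_s(x)$, the slice map acts pointwise and produces $s \mapsto \E(\alpha^{-1}_s(x)) = \alpha^{-1}_s(\E(x))$ (using the commutation once more), which is precisely $\pi(\E(x))$. Normality of the slice map then extends the property from the weak* dense $*$-algebra generated by these elements to the whole crossed product. Faithfulness and normality of $\E \rtimes \Id_{\VN(G)}$ are inherited from the slice map; preservation of the dual weight $\varphi_\rtimes$ is exactly the content of \cite[Theorem 4.1]{HJX10}, and since that paper's argument never uses abelianness of $G$, we may invoke it here verbatim.

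The second assertion will follow by a short direct computation. Since $\lambda_s \ot \Id_H$ belongs to the range $N \rtimes_{\alpha|_N} G$ of $\E \rtimes \Id_{\VN(G)}$, the bimodule property gives
\begin{equation*}
(\E \rtimes \Id_{\VN(G)})\big(\pi(f_s)(\lambda_s \ot \Id_H)\big) = \pi(\E(f_s))(\lambda_s \ot \Id_H)
\end{equation*}
for every $s \in G$. Combining this with the definition \eqref{def-integrale-crossed} and pairing against an arbitrary $\omega \in (M \rtimes_\alpha G)_*$, I will use that $\omega \circ (\E \rtimes \Id_{\VN(G)})$ is still a normal functional and therefore commutes with the weak* integral defining \eqref{def-integrale-crossed}, yielding \eqref{crossed-2}.

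The one substantive obstacle is the weight-preservation clause in the general weight (not trace) setting, which forces the appeal to \cite[Theorem 4.1]{HJX10}; every other step is a purely formal verification on generators combined with the standard density-plus-normality argument.
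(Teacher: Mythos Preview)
Your proposal is correct and follows essentially the same route as the paper: the paper does not give an independent argument for part~1 but simply invokes \cite[Theorem~4.1]{HJX10} (noting that abelianness of $G$ is never used there), and calls part~2 ``an obvious observation left to the reader.'' Your write-up supplies exactly the details one would expect behind those two sentences---the slice-map construction on $M \otvn \B(\L^2(G))$, the check on generators via $\E\alpha_s=\alpha_s\E$, and the normality-plus-bimodule argument for \eqref{crossed-2}.

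One small point worth tightening: when you say ``normality of the slice map then extends the property from the weak* dense $*$-algebra generated by these elements to the whole crossed product,'' remember that a conditional expectation is not multiplicative, so knowing where it sends $\pi(x)$ and $\lambda_s\ot\Id_H$ separately does not automatically control products. The clean fix is already implicit in your part~2 argument: use the commutation rule \eqref{commutation-rules} to rewrite any finite product of generators as $\pi(y)(\lambda_r\ot\Id_H)$, apply the bimodule property of $\E\otvn\Id_{\B(\L^2(G))}$ (whose range $N\otvn\B(\L^2(G))$ contains $\lambda_r\ot\Id_H$), and then pass to the weak* closure by normality.
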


\paragraph{Weights}
We will use the following result which is a particular case of \cite[Theorem 6.2 p. 83]{Str81}. Recall that a normal semifinite weight $\psi$ commutes with a normal semifinite faithful weight $\varphi$ if $\psi \circ \sigma_t^\varphi=\psi$ for any $t \in \R$, see \cite[p. 68]{Str81} and that $\frak{n}_\varphi \ov{\mathrm{def}}{=} \{x \in M : \varphi(x^*x) <\infty\}$. 

\begin{lemma}
\label{lemme-egalite-poids}
Let $\varphi$ and $\psi$ two normal semifinite faithful weights on a von Neumann algebra $M$ such that $\psi$ commutes with $\varphi$. Assume that there exists a weak* dense $*$-subalgebra $A$ of $M$ such that $A \subset \frak{n}_\varphi$ which is $\sigma^\varphi$-invariant such that
$$
\psi(x^*x) 
=\varphi(x^*x), \quad x \in A.
$$
Then $\varphi=\psi$.
\end{lemma}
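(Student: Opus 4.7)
My plan is to reduce the lemma to the cited Stratila theorem [Str81, Theorem 6.2 p. 83], whose content is that a normal semifinite weight commuting with $\varphi$ is determined by its values on any $\sigma^\varphi$-invariant, weak* dense $*$-subalgebra of $\frak{n}_\varphi$. The given subalgebra $A$ is tailor-made to serve as such a core, so the real work is to check that Stratila's hypotheses are met and, if a direct quotation is not at hand, to supply the standard argument from scratch.

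If I had to reprove the statement directly, I would proceed in two steps. First, since $\psi$ is normal, semifinite, faithful and commutes with $\varphi$, I would apply the Pedersen--Takesaki noncommutative Radon--Nikodym theorem to produce a positive self-adjoint operator $h$ affiliated with the centralizer $M^\varphi$ such that, in the appropriate sense, $\psi(y^*y) = \varphi(y^* h y)$ for $y \in \frak{n}_\varphi$. The hypothesis $\psi(x^*x) = \varphi(x^*x)$ on $A$ then becomes $\varphi(x^*(h-1)x) = 0$ for every $x \in A$. Second, I would extend this identity from $A$ to all of $\frak{n}_\varphi$: given $y \in \frak{n}_\varphi$, approximate $y$ in the GNS Hilbert space $\H_\varphi$ by a net in $A$, using the weak* density of $A$ in $M$ together with smoothing by entire analytic elements of $\sigma^\varphi$, that is, $x_n \ov{\mathrm{def}}{=} \sqrt{n/\pi} \int_\R \e^{-n t^2} \sigma_t^\varphi(x) \d t$, which remain in $A$ thanks to the $\sigma^\varphi$-invariance hypothesis. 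Faithfulness of $\varphi$ will then force $h = 1$, hence $\psi = \varphi$.

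The main obstacle will be this density step, namely upgrading the weak* density of $A$ in $M$ to $\norm{\cdot}_\varphi$-density of $A$ in $\frak{n}_\varphi$. The $\sigma^\varphi$-invariance of $A$ is precisely what makes the smoothing argument legitimate and what allows one to manipulate the unbounded Radon--Nikodym derivative $h$; without it, even checking that $\varphi(x^* h x) < \infty$ for $x \in A$ would be delicate. Once that step is in place, the remaining verifications — finiteness, passing to the limit through faithfulness, and concluding $h=1$ — are routine modular-theoretic bookkeeping.
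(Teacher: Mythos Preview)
The paper does not actually prove this lemma: it introduces it as ``a particular case of \cite[Theorem 6.2 p.~83]{Str81}'' and then states it without argument. Your proposal does exactly the same thing---reduce to Stratila's theorem---so at the level of what the paper contains, you are spot on.

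Your additional sketch of a direct proof via the Pedersen--Takesaki Radon--Nikodym derivative $h$ affiliated with $M^\varphi$ goes well beyond what the paper provides, and the outline is essentially the standard one underlying Stratila's result. The identification of the key technical point---upgrading weak* density of $A$ in $M$ to $\norm{\cdot}_\varphi$-density in $\frak{n}_\varphi$ via $\sigma^\varphi$-smoothing, which is exactly where the $\sigma^\varphi$-invariance of $A$ is used---is correct and shows you understand why the hypotheses are shaped the way they are. None of this is needed for the paper's purposes, but nothing in your sketch is wrong.
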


\paragraph{Fourier multipliers} Let $G$ be a locally compact group. We say that a weak* continuous operator $T \co \VN(G)) \to \VN(G)$ is a Fourier multiplier if there exists a continuous function $\phi \co G \to \C$ such that for any $s \in G$ we have $T(\lambda_s)=\phi(s)\lambda_s$. In this case, $\phi$ is bounded and for any function $f \in \mathrm{C}_c(G)$ the element $\int_{G} \phi(s) f(s) \lambda_s \d \mu_G(s)$ belongs to the von Neumann algebra $\VN(G)$ and 
\begin{equation}
\label{equ-def-Fourier-mult}
T\bigg(\int_{G} f(s) \lambda_s \d \mu_G(s)\bigg) 
= \int_{G} \phi(s) f(s) \lambda_s \d \mu_G(s).% \quad \text{i.e.} \quad T(\lambda(f)) =\lambda(\phi f).
\end{equation}
In this case, we let $M_\phi \ov{\mathrm{def}}{=} T$ and we say that $\phi$ is the symbol of $T$. We refer to the books \cite{KaL18} and \cite{ArK20} and references therein for more information.

\paragraph{Semigroups of Fourier multipliers} Consider a locally compact group $G$ with identity element $e$. Let $(T_t)_{t \geq 0}$ be a weak* continuous semigroup of selfadjoint unital completely positive Fourier multipliers. There exists a (unique) continuous real-valued conditionally negative definite function $\psi \co G \to \R$ satisfying $\psi(e) = 0$ such that 
$$
T_t(\lambda_s) 
= \e^{-t \psi(s)} \lambda_s, \quad t \geq 0, s \in G.
$$
In this case, there exists a real Hilbert space $H$ together with a mapping $b_\psi \co G \to H$ and a homomorphism $\pi \co G \to \mathrm{O}(H)$ such that the $1$-cocycle law holds 
\begin{equation}
\label{Cocycle-law}
\pi_s(b_\psi(r))
=b_\psi(sr)-b_\psi(s),
\quad \text{i.e.} \quad 
b_\psi(sr)
=b_\psi(s)+\pi_s(b_\psi(r))
\end{equation}
for any $s,r \in G$ and such that 
\begin{equation}
\label{liens-psi-bpsi}
\psi(s)
=\|b_\psi(s)\|_H^2, \quad s \in G.
\end{equation}
We refer to the book \cite{BHV08} for more information on affine isometric actions of groups and 1-cocycles.

%%%%%%%%%%%%%%%%%%%%%%%%%%%%%%%%%%%%%%%%%%%%%%%%%%%%%%%%%%%%%%%%%%%%%%%%%%%%%%%%%%%%%%%%%%%%%%%%%%%%%%%%%%%%%%%%%%%%%%%%%%%%%%%%%%%%%%%%%%%%%%%%%%%%%%
\section{Markov dilations of semigroups of Fourier multipliers}
\label{sec-Markov}
%%%%%%%%%%%%%%%%%%%%%%%%%%%%%%%%%%%%%%%%%%%%%%%%%%%%%%%%%%%%%%%%%%%%%%%%%%%%%%%%%%%%%%%%%%%%%%%%%%%%%%%%%%%%%%%%%%%%%%%%%%%%%%%%%%%%%%%%%%%%%%%%%%%%%%

%See \cite[Definition 2.5]{Cas1} and \cite[page 637]{JM1} for a general definition for semigroups of unital completely positive maps acting on a von Neumann algebra equipped with a normal state.

%Recall that a filtration of a von Neumann algebra $M$ an increasing family $(M_i) of von Neumann subalgebras of $M$. 

Our main result is the following theorem which gives a standard Markov dilation. Here, we equip the von Neumann algebra $\VN(G)$ with the Plancherel weight.

\begin{thm}
\label{Th-Markov-dilation-Fourier}
Let $G$ be a locally compact group. Consider a weak* continuous semigroup $(T_t)_{t \geq 0}$ of selfadjoint unital completely positive Fourier multipliers on $\VN(G)$ defined by \eqref{liens-psi-bpsi}. Then there exists a von Neumann algebra $M$ equipped with a normal semifinite faithful weight $\varphi_M$, an increasing filtration $(M_t)_{t \geq 0}$ of the algebra $M$ with associated weight preserving normal faithful conditional expectations $\E_t \co M \to M_t$ and weight preserving unital normal injective $*$-homomorphisms $\pi_t \co \VN(G) \to M_t$ such that
\begin{equation}
\label{Equa-standard-Markov}
\E_{u}\pi_t
=\pi_{u}T_{t-u}, \quad 0 \leq u \leq t.
\end{equation} 
Moreover, we have the following properties.
\begin{enumerate}
	\item If $G$ is discrete then the weight $\varphi_M$ is a normal finite faithful trace.
	\item If $G$ is unimodular then the weight $\varphi_M$ is a normal semifinite faithful trace.
	\item If $G$ is amenable then the von Neumann algebra $M$ is injective.
\end{enumerate}
\end{thm}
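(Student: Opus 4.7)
The construction is a concrete Gaussian--crossed-product model in the spirit of \cite{JRS}, but made explicit through the $1$-cocycle data $(H,\pi,b_\psi)$ supplied by \eqref{Cocycle-law}--\eqref{liens-psi-bpsi} together with the $H$-cylindrical Brownian motion of the preliminaries.

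First, I would build the ambient algebra. Fix an $H$-cylindrical Brownian motion $\W$ on a probability space $(\Omega,\mu)$ with natural filtration \eqref{Filtration-H-cylindrical}, and set $N \ov{\mathrm{def}}{=} \L^\infty(\Omega)$ equipped with $\tau(f) \ov{\mathrm{def}}{=} \int_\Omega f \d\mu$. The orthogonal representation $\pi \co G \to \O(H)$ lifts to an orthogonal representation $\tilde\pi \ov{\mathrm{def}}{=} \Id_{\L^2_\R(\R^+)} \ot \pi$ of $G$ on $\L^2_\R(\R^+,H)$, and putting $\alpha_s \ov{\mathrm{def}}{=} \Gamma^\infty(\tilde\pi_s)$ yields, by Lemma \ref{Lemma-semigroup-continuous}(2), a weak* continuous action $\alpha \co G \to \Aut(N)$ which is $\tau$-preserving and leaves each sub-$\sigma$-algebra $\scr{F}_t$ invariant (since $\tilde\pi_s$ acts trivially on the time variable). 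Set $M \ov{\mathrm{def}}{=} N \rtimes_\alpha G$, $M_t \ov{\mathrm{def}}{=} \L^\infty(\Omega,\scr{F}_t) \rtimes_\alpha G$, and let $\varphi_M$ be the crossed product weight arising from \eqref{Plancherel-Non-com}. The classical conditional expectations $\E_{\scr{F}_t}$ are trace preserving and $\alpha$-equivariant, so Lemma \ref{Lemma-crossed} lifts them to faithful normal $\varphi_M$-preserving conditional expectations $\E_t \co M \to M_t$.

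Second, I define the homomorphisms $\pi_t$. For $t \geq 0$ and $s \in G$ put
$$
u_t(s) \ov{\mathrm{def}}{=} \e^{\i \sqrt{2}\,\W_t(b_\psi(s))} \in \mathrm{U}\big(\L^\infty(\Omega,\scr{F}_t)\big).
$$
Combining the $1$-cocycle law \eqref{Cocycle-law} for $b_\psi$ with \eqref{SQ1} shows that $u_t(sr)=u_t(s)\alpha_s(u_t(r))$, and Lemma \ref{Lemma-semigroup-continuous}(1) guarantees strong continuity in $s$. Proposition \ref{Prop-Takesaki} applied to $u_t$ then supplies a $*$-automorphism $U_t$ of $M$, and I define
$$
\pi_t(\lambda_s) \ov{\mathrm{def}}{=} \pi(u_t(s))(\lambda_s \ot \Id), \qquad s \in G.
$$
A direct computation using \eqref{commutation-rules} gives $U_t(\pi_t(\lambda_s))=\lambda_s \ot \Id$, so $\pi_t$ is the composition of $U_t^{-1}$ with the canonical inclusion $\VN(G) \hookrightarrow M$, hence extends to a unital normal injective $*$-homomorphism $\pi_t \co \VN(G) \to M_t$. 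Weight preservation $\varphi_M \circ \pi_t = \varphi_G$ is first checked on integrals $\int_G f(s) \lambda_s \d\mu_G(s)$ with $f \in \mathrm{C}_c(G)$ via \eqref{Plancherel-Non-com} and $\tau(u_t(s)^*u_t(s)) = 1$, and then promoted to $\VN(G)$ using the modular formula \eqref{modular-group-crossed} to apply Lemma \ref{lemme-egalite-poids}.

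Third, I verify the Markov identity \eqref{Equa-standard-Markov}. For $0 \leq u \leq t$, since $\lambda_s \ot \Id \in M_u$ and $\E_u(\pi(x))=\pi(\E_{\scr{F}_u}(x))$ by Lemma \ref{Lemma-crossed}, the $M_u$-bimodule property gives
$$
\E_u(\pi_t(\lambda_s))
= \pi\big(\E_{\scr{F}_u}(u_t(s))\big)(\lambda_s \ot \Id).
$$
Splitting $\W_t(b_\psi(s))=\W_u(b_\psi(s))+[\W_t(b_\psi(s))-\W_u(b_\psi(s))]$ and using the independence \eqref{difference-3} with \eqref{Esperance-exponentielle-complexe-2} and \eqref{liens-psi-bpsi} yields $\E_{\scr{F}_u}(u_t(s)) = \e^{-(t-u)\psi(s)}u_u(s)$, so $\E_u\pi_t(\lambda_s) = \e^{-(t-u)\psi(s)}\pi_u(\lambda_s) = \pi_u T_{t-u}(\lambda_s)$. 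Linearity and weak* continuity finish the main identity on all of $\VN(G)$. The three auxiliary items reduce to standard crossed product facts: (1) discrete $G$ together with finite $\tau$ makes $\varphi_M$ a finite trace; (2) unimodular $G$ combined with $\Delta_G \equiv 1$ and $\alpha$ trace-preserving makes the modular group of \eqref{modular-group-crossed} trivial, so $\varphi_M$ is a trace; (3) for amenable $G$, $M$ is the crossed product of the abelian (hence injective) $N$ by an amenable group, and is therefore injective by the Anantharaman-Delaroche theorem. The main technical hurdle I anticipate is the weight-preservation of $\pi_t$ in the genuinely non-unimodular case, where careful bookkeeping with \eqref{modular-group-crossed} and Lemma \ref{lemme-egalite-poids} is required rather than a direct trace computation.
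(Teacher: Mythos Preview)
Your proposal is correct and follows essentially the same construction as the paper: the same Gaussian probability space via the $H$-cylindrical Brownian motion, the same action $\alpha_s=\Gamma^\infty(\Id\ot\pi_s)$, the same crossed product $M=\L^\infty(\Omega)\rtimes_\alpha G$ with filtration $M_t=\L^\infty(\Omega,\scr{F}_t)\rtimes_\alpha G$, the same cocycle $u_t(s)$ (up to a sign convention absorbed by taking $U_t^{-1}$ instead of $U_t$), and the same verification of \eqref{Equa-standard-Markov} via the independence/increment computation for $\E_{\scr{F}_u}(u_t(s))$. The only cosmetic difference is that the paper checks the Markov identity on smeared elements $\int_G f(s)\lambda_s\,\d\mu_G(s)$ rather than directly on the generators $\lambda_s$, and establishes $\alpha_s\E_{\scr{F}_u}=\E_{\scr{F}_u}\alpha_s$ via the identification $\E_{\scr{F}_u}=\Gamma^\infty(P_u\ot\Id_H)$ rather than your (equally valid) invariance argument; your anticipated ``technical hurdle'' on weight preservation is exactly the inline lemma the paper proves with Lemma~\ref{lemme-egalite-poids}.
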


\begin{proof}
Here, we suppose that $H$, $\pi$ and $b_\psi$ are defined as in \eqref{Cocycle-law}. Let $\W \co \L^2_\R(\R^+,H) \to \L^0(\Omega)$ be an $H$-cylindrical Brownian motion on a probability space $(\Omega,\mu)$, see Section \ref{sec:preliminaries}. For any $s \in G$, we will use the second quantization $\alpha_s \ov{\mathrm{def}}{=} \Gamma^\infty(\Id_{\L^2_\R(\R^+)} \ot \pi_s) \co \L^\infty(\Omega) \to \L^\infty(\Omega)$ which is integral preserving. In particular, if $r,s \in G$ and if $t \geq 0$, we have
\begin{equation}
\label{Action-on-bpsi}
\alpha_s\big(\e^{-\sqrt{2}\i \W_t(b_\psi(r))}\big)
=\Gamma^\infty(\Id_{\L^2_\R(\R^+)} \ot \pi_s)\big(\e^{-\sqrt{2}\i \W_t(b_\psi(r))}\big)
\ov{\eqref{SQ1}\eqref{Def-Wt-h}}{=} \e^{-\sqrt{2}\i \W_t(\pi_s(b_\psi(r)))}.
\end{equation}
Since the orthogonal representation $\pi$ is strongly continuous, we obtain by Lemma \ref{Lemma-semigroup-continuous} a continuous action $\alpha \co G \to \Aut(\L^\infty(\Omega))$. So we can consider the crossed product $M \ov{\mathrm{def}}{=} \L^\infty(\Omega) \rtimes_{\alpha} G$ equipped with its canonical normal semifinite faithful weight $\varphi_M \ov{\mathrm{def}}{=} \varphi_{\rtimes}$. We denote by $J \co \VN(G) \to \L^\infty(\Omega) \rtimes_{\alpha} G$ the canonical unital normal injective $*$-homomorphism. Using \cite[Proposition 2, III p. 35]{Bou04}, for any $f \in \mathrm{C}_c(G)$, we see that
\begin{equation}
\label{Def-de-J-prem}
J\bigg(\int_G f(s) \lambda_s \d\mu_G(s)\bigg)
=\int_G f(s)1 \rtimes \lambda_s \d\mu_G(s).
\end{equation}
The same proof as the one of \cite[Lemma 3.2]{Arh21}, shows that the map $J$ is weight preserving. For any $t \in \R$, we consider the function $u_t \co G \to \mathrm{U}(\L^\infty(\Omega))$, $s \mapsto \e^{-\sqrt{2}\i \W_t(b_\psi(s))}$. The map $b_\psi \co G \to H$ is continuous. By the first point of Lemma \ref{Lemma-semigroup-continuous}, the map $\L^2_\R(\R^+,H) \to \L^\infty(\Omega)$, $g \mapsto \e^{\i\W(g)}$ is continuous if $\L^\infty(\Omega)$ is equipped with the weak* topology, hence with the weak operator topology when we consider that the von Neumann algebra $\L^\infty(\Omega)$ acts on $\L^2(\Omega)$. Recall that by \cite[Exercice 5.7.5]{KaR97} or \cite[p. 41]{Str81} the weak operator topology and the strong operator topology coincide on the unitary group $\mathrm{U}(\L^\infty(\Omega))$. So by composition, the function $u_t$ is continuous if $\mathrm{U}(\L^\infty(\Omega))$ is equipped with the strong operator topology. For any $t \geq 0$ and any $r,s \in G$, note that  
\begin{align*}
\MoveEqLeft
u_t(sr)
=\e^{-\sqrt{2}\i  \W_t(b_\psi(sr))}
\ov{\eqref{Cocycle-law}}{=}
\e^{-\sqrt{2}\i  \W_t(b_\psi(s))}\e^{-\sqrt{2}\i  \W_t(\pi_s(b_\psi(r)))}\\
&\ov{\eqref{Action-on-bpsi}}{=}u_t(s)\alpha_s\big(\e^{-\sqrt{2}\i \W_t(b_\psi(r))}\big)
=u_t(s)\alpha_s(u_t(r)).            
\end{align*}
Hence \eqref{equation-unitaries} is satisfied. By Proposition \ref{Prop-Takesaki}, for any $t \geq 0$, we have a unitary $V_t \co \L^2(G,\L^2(\Omega)) \to \L^2(G,\L^2(\Omega))$, $\xi \mapsto (s \mapsto u_t(s^{-1})(\xi(s)))$ and a $*$-isomorphism 
\begin{equation*}
\begin{array}{cccc}
U_t \co   &  \L^\infty(\Omega) \rtimes_\alpha G   &  \longrightarrow   &  \L^\infty(\Omega) \rtimes_\alpha G  \\
        &  x   &  \longmapsto       &  V_t xV_t^* \\
\end{array}
\end{equation*}
such that for any function $f \in \mathcal{K}(G,\L^\infty(\Omega))$
\begin{equation}
\label{Def-Ut}
U_t\bigg(\int_G f_s \rtimes \lambda_s \d\mu_G(s)\bigg)
= \int_G \e^{\sqrt{2}\i \W_t(b_\psi(s))}f_s  \rtimes \lambda_s \d\mu_G(s), \quad t \geq 0. 
\end{equation}

\begin{lemma}
For any $t \geq 0$, the map $U_t$ is weight preserving.
\end{lemma}

\begin{proof}
We will use Lemma \ref{lemme-egalite-poids} with the weights $\varphi_\rtimes$ and $\varphi_\rtimes \circ U_t$ on $\L^\infty(\Omega) \rtimes_\alpha G$. Note that the space of elements $\int_G f_s \rtimes \lambda_s \d\mu_G(s)$ for $f \in \mathcal{K}(G,\L^\infty(\Omega))$ is a $*$-subalgebra which is $\sigma^{\varphi_\rtimes}$-invariant by \eqref{modular-group-crossed}, weak* dense in $\L^\infty(\Omega) \rtimes_{\alpha} G$ and included in $\mathfrak{n}_{\varphi_\rtimes}$. The formulas \eqref{modular-group-crossed} and \eqref{Def-Ut} show that each $U_t$ and $\sigma_t^{\varphi_\rtimes}$ commute. So, we have
$$
\varphi_\rtimes \circ U_t \circ \sigma_t^{\varphi_\rtimes}
=\varphi_\rtimes \circ \sigma_t^{\varphi_\rtimes} \circ U_t
=\varphi_\rtimes \circ U_t.
$$
So the weights $\varphi_\rtimes \circ U_t$ and $\varphi_\rtimes$ commutes by \cite[pp. 67-68]{Str81}. It is easy to check that the weight $\varphi_\rtimes \circ U_t$ is normal and semifinite. If $f \in \mathcal{K}(G,\L^\infty(\Omega))$, we have
\begin{align*}
\MoveEqLeft
\varphi_\rtimes  \circ U_t\bigg(\bigg(\int_G f_s \rtimes \lambda_s \d\mu_G(s)\bigg)^*\bigg(\int_G f_s \rtimes \lambda_s \d\mu_G(s)\bigg)\bigg) \\          
		&=\varphi_\rtimes\Bigg(\bigg(U_t\bigg(\int_G f_s \rtimes \lambda_s \d\mu_G(s)\bigg)\bigg)^*U_t\bigg(\int_G f_s \rtimes \lambda_s \d\mu_G(s)\bigg)\Bigg) \\
		&\ov{\eqref{Def-Ut}}{=}\varphi_\rtimes\Bigg(\bigg(\int_G \e^{\sqrt{2}\i \W_t(b_\psi(s))}f_s  \rtimes \lambda_s \d\mu_G(s)\bigg)^*\bigg(\int_G \e^{\sqrt{2}\i \W_t(b_\psi(s))}f_s  \rtimes \lambda_s \d\mu_G(s)\bigg)\Bigg)\\
		&\ov{\eqref{Plancherel-Non-com}}{=} \int_G \int_\Omega\e^{-\sqrt{2}\i \W_t(b_\psi(s))}f_s^*\e^{\sqrt{2}\i \W_t(b_\psi(s))}f_s \d \mu \d\mu_G(s)
		=\int_G \int_\Omega f_s^*f_s\d \mu \d\mu_G(s)\\
		&\ov{\eqref{Plancherel-Non-com}}{=} \varphi_\rtimes\bigg(\bigg(\int_G f_s \rtimes \lambda_s \d\mu_G(s)\bigg)^*\bigg(\int_G f_s \rtimes \lambda_s \d\mu_G(s)\bigg)\bigg).
\end{align*} 
We conclude with Lemma \ref{lemme-egalite-poids} that $\varphi_\rtimes \circ U_t=\varphi_\rtimes$ for any $t \geq 0$.
\end{proof}

For any $t \geq 0$, we define the unital normal injective $*$-homomorphism
\begin{equation}
\label{def-Pi-s-Schur-2}
\pi_t\ov{\mathrm{def}}{=} U_tJ \co     \VN(G)     \to  \L^\infty(\Omega) \rtimes_\alpha G .	
\end{equation}
Each $\pi_t$ is weight preserving by composition. For any $t \geq 0$, we also define the canonical normal conditional expectations $\E_{\scr{F}_t} \co \L^\infty(\Omega) \to \L^\infty(\Omega)$ on $\L^\infty(\Omega,\scr{F}_t)$ where the $\sigma$-algebra $\scr{F}_t$ is defined in \eqref{Filtration-H-cylindrical}. Recall that $(\W_t(h))_{t \geq 0}$ is a Brownian motion for any fixed $h \in H$. Hence for any $0 \leq u \leq t$ and any $s \in G$ the random variable 
\begin{align}
\MoveEqLeft
\label{Eq-inter-1}
\W\big(1_{]u,t]} \ot b_\psi(s)\big)
=\W\big(1_{[0,t]} \ot b_\psi(s)\big)-\W\big(1_{[0,u]} \ot b_\psi(s)\big) 
\ov{\eqref{Def-Wt-h}}{=} \W_t(b_\psi(s))-\W_u(b_\psi(s)) 
\end{align}
is independent by \eqref{difference-3} from the $\sigma$-algebra $\scr{F}_u \ov{\eqref{Filtration-H-cylindrical}}{=} \sigma\big(\W_r(h) : r \in [0,u], h \in H \big)$. Consequently, the random variable $\e^{\sqrt{2}\i\W(1_{]u,t]}\ot b_\psi(s))}$ is also independent from the $\sigma$-algebra $\scr{F}_u$.

\begin{lemma}
The $\sigma$-algebra $\scr{F}_u$ is equal to the $\sigma$-algebra $\scr{G}$ generated by the random variables $\W(g)$ where $g \in \L^2_\R([0,u],H)$.
\end{lemma}

\begin{proof}
It suffices to show that the space $\L^\infty(\Omega,\scr{F}_u)$ is weak* dense in $\L^\infty(\Omega,\scr{G})$. Note that by \cite[Remark 1.2.20 p.~24]{HvNVW16} the subspace $E$ of elements $\sum_{k=1}^{m} 1_{[c_k,d_k]} \ot h_k$, where $0 \leq c_1 < d_1 < c_2 < d_2 < \cdots < c_m < d_m \leq u$ and $h_1,\ldots,h_m \in H$, is dense in the real Hilbert space $\L^2_\R([0,u],H)$. Hence by the discussion after \eqref{Concrete-W} the span of elements $\e^{\i\W(f)}$ where $f \in E$ is weak* dense in the space $\Gamma_1(\L^2_\R([0,u],H))$, which identifies to $\L^\infty(\Omega,\scr{G})$. We can conclude since $\L^\infty(\Omega,\scr{F}_u)$ contains these elements.% by \cite[Proposition B.5]{Gui19}. 
\end{proof}

Consider the projection $P_u \co \L^2_\R(\R^+,H) \to \L^2_\R(\R^+,H)$ on the closed subspace $\L^2_\R([0,u],H)$. By the previous lemma and by an observation following \eqref{SQ1}, we see that $\E_{\scr{F}_u}=\Gamma^\infty(P_u \ot \Id_H)$ for any $u \geq 0$. Consequently, for any $s \in G$ and any $u \geq 0$, we obtain that
\begin{align*}
\MoveEqLeft
\alpha_s\E_{\scr{F}_u}
=\Gamma^\infty(\Id_{\L^2_\R(\R^+)} \ot \pi_s)\Gamma^\infty(P_u \ot \Id_H)
=\Gamma^\infty(P_u \ot \pi_s) \\
&=\Gamma^\infty(P_u \ot \Id_H) \Gamma^\infty(\Id_{\L^2_\R(\R^+)} \ot \pi_s)
=\E_{\scr{F}_u} \alpha_s.         
\end{align*}
So by Lemma \ref{Lemma-crossed}, we can consider the map $\E_t \ov{\mathrm{def}}{=} \E_{\scr{F}_t} \rtimes \Id_{\VN(G)} \co \L^\infty(\Omega) \rtimes_\alpha G \to \L^\infty(\Omega,\scr{F}_t) \rtimes_\alpha G$. We introduce the von Neumann algebra $N_t \ov{\mathrm{def}}{=} \L^\infty(\Omega,\scr{F}_t) \rtimes_\alpha G$. Moreover, we have
\begin{align}
\MoveEqLeft
\label{particular}
\E_{\scr{F}_u}\big(\e^{\sqrt{2}\i  \W_t(b_\psi(s))}\big)
\ov{\eqref{Bimodule}}{=} \e^{\sqrt{2}\i  \W_u(b_\psi(s))} \E_{\scr{F}_u}\big(\e^{\sqrt{2}\i \W(1_{[u,t]} \ot b_\psi(s))}\big)\\
&\ov{\eqref{Hy-2.6.35}}{=} \e^{\sqrt{2}\i  \W_u(b_\psi(s))} \E\big(\e^{\sqrt{2}\i \W(1_{[u,t]} \ot b_\psi(s))}\big) 
\ov{\eqref{Esperance-exponentielle-complexe-2}}{=}\e^{-(t-u)\norm{b_\psi(s)}_{H}^2} \e^{\sqrt{2}\i  \W_u(b_\psi(s))}. \nonumber
\end{align}
For any function $f \in \mathrm{C}_c(G)$ and any $t \geq 0$, we have
\begin{align}
\MoveEqLeft
\label{First-calcul}
\pi_t\bigg(\int_G f(s) \lambda_s \d\mu_G(s)\bigg)          
\ov{\eqref{def-Pi-s-Schur-2}}{=} U_tJ\bigg(\int_G f(s) \lambda_s \d\mu_G(s)\bigg) \\
&=U_t\bigg(\int_G f(s)1 \rtimes \lambda_s \d\mu_G(s)\bigg)
\ov{\eqref{Def-Ut}}{=} \int_G f(s)\e^{\sqrt{2}\i  \W_t(b_\psi(s))}  \rtimes \lambda_s \d\mu_G(s). \nonumber
\end{align} 
%For almost all $\omega \in \Omega$, the Hilbert-Schmidt operator 
%$$
%\big(\pi_t(\lambda_f)\big)(\omega)
%\ov{\eqref{def-Pi-s-Schur-2}}{=} \big(U_t(1 \ot \lambda_f)\big)(\omega)
%$$ 
%is associated by the computation \eqref{prem-series-2} with the function 
%\begin{equation}
%\label{Equa-inter-5}
%(x,y) \mapsto \e^{\sqrt{2}\i(\W_t(\alpha_x-\alpha_y))(\omega)} f(x,y).
%\end{equation}
Similarly, for any $0 \leq u \leq t$, we have
\begin{align}
\MoveEqLeft
\label{Calcul-2}
\pi_uT_{t-u}\bigg(\int_G f(s) \lambda_s \d\mu_G(s)\bigg)           
\ov{\eqref{equ-def-Fourier-mult}}{=} \pi_u\bigg(\int_G \e^{-(t-u)\norm{b_\psi(s)}_{H}^2} f(s) \lambda_s \d\mu_G(s)\bigg) \\
&\ov{\eqref{First-calcul}}{=} \int_G f(s)\e^{-(t-u)\norm{b_\psi(s)}_{H}^2}\e^{\sqrt{2}\i  \W_u(b_\psi(s))}  \rtimes \lambda_s \d\mu_G(s). \nonumber
\end{align} 
%is Hilbert-Schmidt and associated with the function 
%\begin{equation}
%\label{Divers-32}
%(x,y) \mapsto \e^{\sqrt{2}\i(\W_s(\alpha_x-\alpha_y))(\omega)} \e^{-(t-s)\norm{\alpha_x-\alpha_y}_{H}^2}f(x,y).
%\end{equation}
We finally obtain for any $0 \leq u \leq t$ and any function $f \in \mathrm{C}_c(G)$
\begin{align*}
\MoveEqLeft
\E_u\pi_t\bigg(\int_G f(s) \lambda_s \d\mu_G(s)\bigg)              
\ov{\eqref{First-calcul}}{=} \E_u\bigg( \int_G f(s)\e^{\sqrt{2}\i\W_t(b_\psi(s))}  \rtimes \lambda_s \d\mu_G(s)\bigg) \\
&\ov{\eqref{crossed-2}}{=} \int_G f(s)\E_{\scr{F}_u}\big(\e^{\sqrt{2}\i\W_t(b_\psi(s))} \big) \rtimes \lambda_s \d\mu_G(s) \\
&\ov{\eqref{particular}}{=} \int_G f(s) \e^{-(t-u)\norm{b_\psi(s)}_{H}^2}\e^{\sqrt{2}\i\W_u(b_\psi(s))}  \rtimes \lambda_s \d\mu_G(s) \\
%&\ov{\eqref{First-calcul}}{=} \pi_u\bigg(\int_G \e^{-(t-u)\norm{b_\psi(s)}_{H}^2}f(s)  \rtimes \lambda_s \d\mu_G(s) \bigg)
&\ov{\eqref{Calcul-2}}{=} \pi_u T_{t-u}\bigg(\int_G f(s) \lambda_s \d\mu_G(s)\bigg) .
\end{align*} 
By weak* density, the proof is complete.

Now, we prove the last assertions. Note each $\alpha_s \co \L^\infty(\Omega) \to \L^\infty(\Omega)$ preserves the integral. The first is well-known, e. g. \cite[Corollary 7.11.8]{Ped79}. The second is folklore. The third is \cite[Proposition p. 301]{Ana79}.
\end{proof}

%See \cite[Definition 2.6]{Cas1} (see also \cite[page 638]{JM1}, \cite[Section 4.1]{JuZ3}
%) for a general definition of a reversed Markov dilation in the $\sigma$-finite case. 

Similarly, we can prove the following reversed Markov dilation.

\begin{thm}
\label{Th-reversed-Markov-dilation-Schur}
Let $G$ be a locally compact group. Consider a weak* continuous semigroup $(T_t)_{t \geq 0}$ of selfadjoint unital completely positive Fourier multipliers on $\VN(G)$ defined by \eqref{liens-psi-bpsi}. There exists a von Neumann algebra $M$ equipped with a normal semifinite faithful weight, a decreasing filtration $(M_t)_{t \geq 0}$ of $M$ with associated weight preserving normal faithful conditional expectations $\E_t \co M \to M_t$ and weight preserving unital normal injective $*$-homomorphisms $\pi_t \co \VN(G) \to M_t$ such that
\begin{equation}
\label{-equa-reversed-Markov}
\E_u\pi_t
=\pi_uT_{u-t}, \quad 0 \leq t \leq u.
\end{equation}
\end{thm}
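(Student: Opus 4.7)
The plan is to mirror the argument of Theorem \ref{Th-Markov-dilation-Fourier} after reversing the natural filtration of the auxiliary cylindrical Brownian motion. Explicitly, I would take an $H$-cylindrical Brownian motion $\W \co \L^2_\R(\R,H) \to \L^0(\Omega)$ indexed by the whole real line, set $\alpha_s \ov{\mathrm{def}}{=} \Gamma^\infty(\Id_{\L^2_\R(\R)} \ot \pi_s)$ and $M \ov{\mathrm{def}}{=} \L^\infty(\Omega) \rtimes_\alpha G$ with its canonical weight $\varphi_\rtimes$. For each $t \geq 0$ let $\scr{G}_t \ov{\mathrm{def}}{=} \sigma(\W(g) : g \in \L^2_\R((-\infty,-t],H))$ (decreasing in $t$) and $M_t \ov{\mathrm{def}}{=} \L^\infty(\Omega,\scr{G}_t) \rtimes_\alpha G$. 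Since $\alpha_s$ preserves each $\L^\infty(\Omega,\scr{G}_t)$, the map $\E_t \ov{\mathrm{def}}{=} \E_{\scr{G}_t} \rtimes \Id_{\VN(G)}$ provided by Lemma \ref{Lemma-crossed} is a weight preserving normal faithful conditional expectation $M \to M_t$.

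For the $*$-homomorphisms I would fix a large cutoff $T>0$ and introduce the unitary map
$$
u_t(s) \ov{\mathrm{def}}{=} \e^{\sqrt{2}\i \W(1_{[-T,-t]} \ot b_\psi(s))}, \quad s \in G,\ t \in [0,T].
$$
The $1$-cocycle identity $u_t(sr) = u_t(s)\alpha_s(u_t(r))$ is verified by the same calculation as in \eqref{Action-on-bpsi}, using \eqref{Cocycle-law} and \eqref{SQ1}. Proposition \ref{Prop-Takesaki} then yields a $*$-automorphism $U_t$ of $M$, whose weight preservation is established verbatim as in the internal lemma of the forward proof. Setting $\pi_t \ov{\mathrm{def}}{=} U_t J$ gives a weight preserving unital normal injective $*$-homomorphism $\VN(G) \to M_t$ (the image lies in $M_t$ since $u_t(s) \in \L^\infty(\Omega,\scr{G}_t)$). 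The key computation is the analogue of \eqref{particular}: for $0 \leq t \leq u \leq T$ the random variable $\W(1_{[-u,-t]} \ot b_\psi(s))$ is supported in $[-u,-t]$, disjoint from $(-\infty,-u]$, hence Gaussian of variance $(u-t)\norm{b_\psi(s)}_H^2$ and independent of $\scr{G}_u$. Together with \eqref{Bimodule}, \eqref{Hy-2.6.35} and \eqref{Esperance-exponentielle-complexe-2} this yields
$$
\E_{\scr{G}_u}\bigl(\e^{\sqrt{2}\i \W(1_{[-T,-t]} \ot b_\psi(s))}\bigr) = \e^{-(u-t)\psi(s)}\, \e^{\sqrt{2}\i \W(1_{[-T,-u]} \ot b_\psi(s))}.
$$
Inserting this into the chain of identities starting at \eqref{First-calcul}, using \eqref{crossed-2} and \eqref{equ-def-Fourier-mult}, and extending by weak* density to all of $\VN(G)$ gives \eqref{-equa-reversed-Markov} on $[0,T]$.

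The main obstacle is removing the cutoff $T$: no genuine ``reverse'' $H$-cylindrical Brownian motion over $[0,\infty)$ can exist in a Hilbert-space setting, because a covariance computation forces any candidate family $(g_t)_{t \geq 0}$ of vectors in a real Hilbert space with $\norm{g_u - g_t}^2 = u-t$ and $(g_u - g_t) \perp g_r$ for $r \geq u$ to satisfy $\norm{g_t}^2 = C - t$ for some constant $C$, hence to exist only for $t \leq C$. I would address this by treating the cutoff $T$ as a parameter and verifying that the $\pi_t^{(T)}$'s for distinct cutoffs differ only by a single $t$-independent $*$-automorphism of $M$---which is visible from the fact that only the ratio $u_t(s) u_u(s)^{-1} = \e^{-\sqrt{2}\i \W(1_{[-u,-t]} \ot b_\psi(s))}$ enters the conditional expectation---and then passing to an inductive limit as $T \to \infty$ (equivalently, fixing a normalisation of the $u_t(s)$ once and for all). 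This bookkeeping is the only non-routine addition to the forward proof.
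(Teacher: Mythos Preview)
The paper gives no proof beyond the sentence ``Similarly, we can prove the following reversed Markov dilation.'' Your cutoff construction on $[0,T]$ is exactly the natural ``similar'' adaptation: after reflecting the time axis, the cocycle identity for $u_t$, the application of Proposition~\ref{Prop-Takesaki}, weight preservation of $U_t$, commutation of $\alpha_s$ with $\E_{\scr{G}_t}$, and the key conditional-expectation computation all go through verbatim. On any bounded interval your argument is complete and is presumably what the author has in mind.

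Your diagnosis of the obstruction to removing the cutoff is correct and is more careful than the paper itself: the Hilbert-space computation you sketch does show that no family of $\scr{G}_t$-measurable Gaussian exponentials can realise the cocycles $u_t(s)$ for all $t\geq 0$ simultaneously. However, the repair you propose does not close the gap. You correctly observe that $\pi_t^{(T')}=V^{(T,T')}\pi_t^{(T)}$ for a $t$-independent automorphism $V^{(T,T')}$ built from the cocycle $s\mapsto \e^{\sqrt{2}\i\W(1_{[-T',-T]}\ot b_\psi(s))}$. But this cocycle is only $\scr{G}_r$-measurable for $r\leq T$, so $V^{(T,T')}$ does \emph{not} preserve the subalgebras $M_r$ for $r>T$. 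Consequently any ``normalised'' family such as $(V^{(0,T)})^{-1}\pi_t^{(T)}$ again lands in $M_t$ only for $t$ below the fixed reference cutoff, and one has gained nothing. What you have established is compatibility of the \emph{relations} $\E_u\pi_t=\pi_u T_{u-t}$ across different $T$, not compatibility of the maps $\pi_t$ themselves; this does not assemble into a single family $(\pi_t)_{t\geq 0}$ inside one fixed crossed product. The phrase ``passing to an inductive limit'' therefore hides a genuine construction that still needs to be supplied (for instance, replacing $M$ by a larger algebra in which the obstruction disappears, or reformulating the filtration so that $\pi_t(\VN(G))\subset M_t$ can be achieved without a cutoff).
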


\textbf{Acknowledgements}.
The author acknowledges support by the grant ANR-18-CE40-0021 (project HASCON) of the French National Research Agency ANR. The author is grateful to the referee for some corrections and a simplification of the proof.

%would like to thank,  and  for some discussions and  for comments on the paper. 
%The authors thank the anonymous reviewers for their extraordinarily thoughtful, thorough, and useful feedback
%%%%%%%%%%%%%%%%%%%%%%%%%%%%%%%%%%%%%%%%%
%%%%%%%%%%%%%%%%%%%%%%%%%%%%%%%%%%%%%%%%%
\small

\vspace{0.2cm}
\footnotesize{
\noindent C\'edric Arhancet\\ 
\noindent 6 rue Didier Daurat, 81000 Albi, France\\
URL: \href{http://sites.google.com/site/cedricarhancet}{https://sites.google.com/site/cedricarhancet}\\
cedric.arhancet@protonmail.com\\

\normalsize

\end{document}

\newpage 

\begin{proof}
Note that by \cite[Remark 1.2.20]{HvNVW16} the space of elements $\sum_{k=1}^{m} 1_{[c_k,d_k]} \ot h_k$, where $c_1 < d_1 < c_2 < d_2 < \cdots < c_m < d_m$ and $h_1,\ldots,h_m \in H$, is dense in the real Hilbert space $\L^2_\R(\R^+,H)$. We infer by weak* density that it suffices to show that 
\begin{equation}
\label{ultime}
\alpha_s\E_{\scr{F}_u}\big(\e^{\sqrt{2}\i  \W(\sum_{k=1}^{m}1_{[c_k,d_k]} \ot h_k)}\big)
=\E_{\scr{F}_u}\alpha_s\big(\e^{\sqrt{2}\i  \W(\sum_{k=1}^{m} 1_{[c_k,d_k]} \ot h_k)}\big).
\end{equation}
We reason by cases.

\textit{First case} Suppose that $d_n < u < c_{n+1}$ for some $n$. On the one hand, using the $\scr{F}_u$-measurability of the random variable $\e^{\sum_{k=1}^{n}\sqrt{2}\i  \W(1_{[c_k,d_k]} \ot h_k)}$ in the second equality, we obtain
\begin{align}
\MoveEqLeft
\alpha_s\E_{\scr{F}_u}\big(\e^{\sqrt{2}\i  \W(\sum_{k=1}^{m}1_{[c_k,d_k]} \ot h_k)}\big)    
=\alpha_s\E_{\scr{F}_u}\big(\e^{\sum_{k=1}^{m}\sqrt{2}\i  \W(1_{[c_k,d_k]} \ot h_k)}\big) \nonumber\\   
&\ov{\eqref{Bimodule}}{=} \alpha_s\Big(\e^{\sum_{k=1}^{n}\sqrt{2}\i  \W(1_{[c_k,d_k]} \ot h_k)}\E_{\scr{F}_u}\big(\e^{\sum_{k=n+1}^{m}\sqrt{2}\i  \W(1_{[c_k,d_k]} \ot h_k)}\big)\Big) \nonumber \\
&=\e^{\sum_{k=1}^{n}\sqrt{2}\i  \W(1_{[c_k,d_k]} \ot \pi_s(h_k))}\alpha_s\Big(\E_{\scr{F}_u}\big(\prod_{k=n+1}^{m}\e^{\sqrt{2}\i  \W(1_{[c_k,d_k]} \ot h_k)}\big)\Big)\nonumber\\
&\ov{\eqref{Hy-2.6.35}}{=} \prod_{k=1}^{n}\e^{\sqrt{2}\i  \W(1_{[c_k,d_k]} \ot \pi_s(h_k))}\alpha_s\Big(\E\big(\prod_{k=n+1}^{m}\e^{\sqrt{2}\i  \W(1_{[c_k,d_k]} \ot h_k)}\big)\Big)\nonumber\\
&=\Big(\prod_{k=1}^{n}\e^{\sqrt{2}\i  \W(1_{[c_k,d_k]} \ot \pi_s(h_k))}\Big) \Big(\prod_{k=n+1}^{m}\E\big(\e^{\sqrt{2}\i  \W(1_{[c_k,d_k]} \ot h_k)}\big)\Big) \nonumber \\
&\ov{\eqref{Esperance-exponentielle-complexe-2}}{=} \Big(\prod_{k=1}^{n}\e^{\sqrt{2}\i  \W(1_{[c_k,d_k]} \ot \pi_s(h_k))}\Big) \Big(\prod_{k=n+1}^{m} \e^{-\frac{d_k-c_k}{2} \norm{h_k}_H^2}\Big). \nonumber
%\ov{\eqref{Def-Wt-h}}{=} \alpha_s\E_{\scr{F}_u}\big( \e^{\sqrt{2}\i  \W_u(h)} \e^{\sqrt{2}\i  \W_{]u,t]}(h)} \big) \\
%&\ov{\eqref{Bimodule}}{=} \alpha_s\big( \e^{\sqrt{2}\i  \W_u(h)} \E_{\scr{F}_u}(\e^{\sqrt{2}\i  \W_{]u,t]}(h)} )\big) 
%\ov{\eqref{Hy-2.6.35}}{=} \E\big(\e^{\sqrt{2}\i  \W_{]u,t]}(h)} \big) \alpha_s\big( \e^{\sqrt{2}\i  \W_u(h)} \big) \nonumber\\
%&\ov{\eqref{SQ1}}{=} \E\big(\e^{\sqrt{2}\i  \W_{]u,t]}(h)}\big) \e^{\sqrt{2}\i  \W_u(\pi_s(h))} \nonumber 
%\ov{\eqref{Esperance-exponentielle-complexe-2}}{=} \e^{-(t-u)\norm{h}_{H}^2} \e^{\sqrt{2}\i \W_u(\pi_s(h))} \nonumber
\end{align} 
On the other hand, we have
\begin{align}
\MoveEqLeft
\E_{\scr{F}_u}\alpha_s\big(\e^{\sqrt{2}\i  \W(\sum_{k=1}^{m}1_{[c_k,d_k]} \ot h_k)}\big)    
=\E_{\scr{F}_u}\big(\e^{\sum_{k=1}^{m}\sqrt{2}\i  \W(1_{[c_k,d_k]} \ot \pi_s(h_k))}\big) \nonumber \\  
&\ov{\eqref{Bimodule}}{=} \e^{\sum_{k=1}^{n}\sqrt{2}\i  \W(1_{[c_k,d_k]} \ot \pi_s(h_k))}\E_{\scr{F}_u}\bigg(\prod_{k=n+1}^{m}\e^{\sqrt{2}\i  \W(1_{[c_k,d_k]} \ot \pi_s(h_k))}\bigg) \nonumber\\
&\ov{\eqref{Hy-2.6.35}}{=} \prod_{k=1}^{n}\e^{\sqrt{2}\i  \W(1_{[c_k,d_k]} \ot \pi_s(h_k))}\E\bigg(\prod_{k=n+1}^{m}\e^{\sqrt{2}\i  \W(1_{[c_k,d_k]} \ot \pi_s(h_k))}\bigg) \nonumber\\
&=\prod_{k=1}^{n}\e^{\sqrt{2}\i  \W(1_{[c_k,d_k]} \ot \pi_s(h_k))}\bigg(\prod_{k=n+1}^{m}\E\big(\e^{\sqrt{2}\i  \W(1_{[c_k,d_k]} \ot \pi_s(h_k))}\big) \bigg) \nonumber \\ 
&\ov{\eqref{Esperance-exponentielle-complexe-2}}{=} \Big(\prod_{k=1}^{n}\e^{\sqrt{2}\i  \W(1_{[c_k,d_k]} \ot \pi_s(h_k))}\Big) \Big(\prod_{k=n+1}^{m} \e^{-\frac{d_k-c_k}{2} \norm{\pi_s(h_k)}_H^2}\Big) \nonumber \\
&=\Big(\prod_{k=1}^{n}\e^{\sqrt{2}\i  \W(1_{[c_k,d_k]} \ot \pi_s(h_k))}\Big) \Big(\prod_{k=n+1}^{m} \e^{-\frac{d_k-c_k}{2} \norm{h_k}_H^2}\Big). \nonumber
\end{align}

\textit{Second case} Suppose that $c_n  < u < d_n$ for some $n$. On the one hand, we have
\begin{align*}
\MoveEqLeft
\E_{\scr{F}_u}\alpha_s\big(\e^{\sqrt{2}\i  \W(\sum_{k=1}^{m}1_{[c_k,d_k]} \ot h_k)}\big) \\   
&=\E_{\scr{F}_u}\big(\e^{\sqrt{2}\i  \W(\sum_{k=1}^{m}1_{[c_k,d_k]} \ot \pi_s(h_k))}\big) \\
&=\E_{\scr{F}_u}\bigg(\prod_{k=1}^{n-1}\e^{\sqrt{2}\i  \W(1_{[c_k,d_k]} \ot \pi_s(h_k))}\e^{\sqrt{2}\i  \W(1_{[c_n,u]} \ot \pi_s(h_n))}\e^{\sqrt{2}\i  \W(1_{[u,d_n]} \ot  \pi_s(h_n))}\\
&\qquad\qquad\qquad\qquad\qquad\qquad\qquad\qquad\qquad\qquad\qquad \times \prod_{k=n+1}^{m}\e^{\sqrt{2}\i  \W(1_{[c_k,d_k]} \ot \pi_s(h_k))}\bigg) \\
&\ov{\eqref{Bimodule}}{=} \prod_{k=1}^{n-1}\e^{\sqrt{2}\i  \W(1_{[c_k,d_k]} \ot \pi_s(h_k))}\e^{\sqrt{2}\i  \W(1_{[c_n,u]} \ot \pi_s(h_n))}\E_{\scr{F}_u}\bigg(\e^{\sqrt{2}\i  \W(1_{[u,d_n]} \ot  \pi_s(h_n))} \bigg.\\
&\qquad \qquad \qquad \qquad \qquad \qquad \qquad\qquad \qquad \qquad \qquad \times \bigg.\prod_{k=n+1}^{m}\e^{\sqrt{2}\i  \W(1_{[c_k,d_k]} \ot \pi_s(h_k))}\bigg) \\
&\ov{\eqref{Hy-2.6.35}}{=}\prod_{k=1}^{n-1}\e^{\sqrt{2}\i  \W(1_{[c_k,d_k]} \ot \pi_s(h_k))}\e^{\sqrt{2}\i  \W(1_{[c_n,u]} \ot \pi_s(h_n))}\E\bigg(\e^{\sqrt{2}\i  \W(1_{[u,d_n]} \ot  \pi_s(h_n))}\bigg. \\
&\qquad\qquad\qquad\qquad\qquad\qquad\qquad\qquad\qquad\qquad\qquad\qquad\bigg. \times \prod_{k=n+1}^{m}\e^{\sqrt{2}\i  \W(1_{[c_k,d_k]} \ot \pi_s(h_k))}\bigg) \\
&=\prod_{k=1}^{n-1}\e^{\sqrt{2}\i  \W(1_{[c_k,d_k]} \ot \pi_s(h_k))}\e^{\sqrt{2}\i \W(1_{[c_n,u]} \ot \pi_s(h_n))}\E\big(\e^{\sqrt{2}\i  \W(1_{[u,d_n]} \ot \pi_s(h_n))}\big)\\
&\qquad\qquad\qquad\qquad\qquad\qquad\qquad\qquad\qquad\qquad\qquad \times \bigg(\prod_{k=n+1}^{m}\E\bigg(\e^{\sqrt{2}\i  \W(1_{[c_k,d_k]} \ot \pi_s(h_k))}\bigg)\\
&\ov{\eqref{Esperance-exponentielle-complexe-2}}{=} \prod_{k=1}^{n-1}\e^{\sqrt{2}\i  \W(1_{[c_k,d_k]} \ot \pi_s(h_k))}\e^{\sqrt{2}\i  \W(1_{[c_n,u]} \ot \pi_s(h_n))} \e^{-\frac{d_n-u}{2} \norm{\pi_s(h_k)}_H^2}\\
&\qquad\qquad\qquad\qquad\qquad\qquad\qquad\qquad\qquad\qquad\qquad\qquad \times \bigg(\prod_{k=n+1}^{m}\e^{-\frac{d_k-c_k}{2} \norm{\pi_s(h_k)}_H^2}\bigg) \\
&=\prod_{k=1}^{n-1}\e^{\sqrt{2}\i  \W(1_{[c_k,d_k]} \ot \pi_s(h_k))}\e^{\sqrt{2}\i  \W(1_{[c_n,u]} \ot \pi_s(h_n))} \e^{-\frac{d_n-u}{2} \norm{h_n}_H^2}\bigg(\prod_{k=n+1}^{m}\e^{-\frac{d_k-c_k}{2} \norm{h_k}_H^2}\bigg).   
\end{align*} 
On the other hand, we have\begin{align*}
\MoveEqLeft
\alpha_s\E_{\scr{F}_u}\big(\e^{\sqrt{2}\i  \W(\sum_{k=1}^{m}1_{[c_k,d_k]} \ot h_k)}\big)    
=\alpha_s\E_{\scr{F}_u}\bigg(\prod_{k=1}^{m}\e^{\sqrt{2}\i  \W(1_{[c_k,d_k]} \ot h_k)}\bigg) \\   
&=\alpha_s\E_{\scr{F}_u}\bigg(\prod_{k=1}^{n-1}\e^{\sqrt{2}\i  \W(1_{[c_k,d_k]} \ot h_k)}\e^{\sqrt{2}\i  \W(1_{[c_n,u]} \ot h_n)}\e^{\sqrt{2}\i  \W(1_{[u,d_n]} \ot h_n)}\prod_{k=n+1}^{m}\e^{\sqrt{2}\i  \W(1_{[c_k,d_k]} \ot h_k)}\bigg) \\
&\ov{\eqref{Bimodule}}{=} \alpha_s\bigg(\prod_{k=1}^{n-1}\e^{\sqrt{2}\i  \W(1_{[c_k,d_k]} \ot h_k)}\e^{\sqrt{2}\i  \W(1_{[c_n,u]} \ot h_n)}\E_{\scr{F}_u}\bigg(\e^{\sqrt{2}\i  \W(1_{[u,d_n]} \ot h_n)}\bigg.\bigg.\\
& \qquad \qquad \qquad\qquad\qquad\qquad\qquad\qquad\qquad\qquad\qquad\qquad\bigg.\bigg.\prod_{k=n+1}^{m}\e^{\sqrt{2}\i  \W(1_{[c_k,d_k]} \ot h_k)}\bigg)\bigg)\\
&\ov{\eqref{Hy-2.6.35}}{=} \prod_{k=1}^{n-1}\e^{\sqrt{2}\i  \W(1_{[c_k,d_k]} \ot \pi_s(h_k))}\e^{\sqrt{2}\i  \W(1_{[c_n,u]} \ot \pi_s(h_n))}\E\bigg(\e^{\sqrt{2}\i  \W(1_{[u,d_n]} \ot h_n)}\bigg.\\
&\qquad \qquad\qquad\qquad\qquad\qquad\qquad\qquad\qquad\qquad\qquad\qquad\bigg.\prod_{k=n+1}^{m}\e^{\sqrt{2}\i  \W(1_{[c_k,d_k]} \ot h_k)}\bigg) \\
&\ov{\eqref{Esperance-exponentielle-complexe-2}}{=} \prod_{k=1}^{n-1}\e^{\sqrt{2}\i  \W(1_{[c_k,d_k]} \ot \pi_s(h_k))}\e^{\sqrt{2}\i  \W(1_{[c_n,u]} \ot \pi_s(h_n))}\e^{-\frac{d_n-u}{2} \norm{h_n}_H^2} \prod_{k=n+1}^{m} \e^{-\frac{d_k-c_k}{2} \norm{h_k}_H^2}.
\end{align*} 
The proof is complete.
%If $0 \leq t \leq u$, the equality is equally true with \eqref{Action-on-bpsi}. Generalizing these computations using simple functions instead of the characteristic function $1_{[0,t]}$, we obtain by density the equality $\alpha_s\E_{\scr{F}_u}=\E_{\scr{F}_u} \alpha_s$ for any $s \in G$ and any $u \geq 0$. 
\end{proof}

\end{document}

\section{Discussion}

%%%%%%%%%%%%%%%%%%%%%%%%%%%%%%%%%%%%
\subsection{La formule du produit scalaire}
$$
\tr(K_f K_g)
=\int_{X \times X} f(x,y) g(y,x) \d x \d y.
$$
The selfadjointness property is equivalent to
%\eqref{Selfadjoint} is essentially equivalent to
$$
\tr\big(M_\varphi(K_f)K_g^*\big)
=\tr\big(K_f(M_\varphi(K_g))^*\big), \quad \text{i.e.} \quad \tr\big(K_{\varphi f}K_g^*\big)
=\tr\big(K_f(K_{\varphi g})^*\big)
$$
for any $f,g \in \L^2(X \times X)$. Since $\L^2(X \times X) \to S^2_X$, $f \mapsto K_f$ is an isometry from the Hilbert space $\L^2(X \times X)$ onto the Hilbert space $S^2_X$ of Hilbert-Schmidt operators on $\L^2(X)$, that means that
$$
\iint_{X \times X} \varphi(x,y)f(x,y)\ovl{g(x,y)} \d x\d y
=\iint_{X \times X} f(x,y) \ovl{\varphi(x,y)g(x,y)} \d x\d y.
$$

%%%%%%%%%%%%%%%%%%%%%%%%%%%%%%%%%%%%%%%%%%%%%%%%%%%%%%%%%%%%%%%%%%%%%%%%%%%%%%%%%%%%%%%
\subsection{Fukumizu }
Kenji Fukumizu (demander ref) en fait je crois que c'est fit dans Cartier

\newpage

\subsection{Convolution groupoids}

Now, we prove the converse. 
Suppose that $M_\varphi$ is trace preserving. Let $K_g$ be an element of $S^2_X$ with a continuous kernel $g \in \L^2(X \times X)$. Then if we define $h(x,y) \ov{\mathrm{def}}{=} \int_{X} \ovl{g(z,x)}g(z,y) \d z$ then by \eqref{composition-Hilbert-Schmidt} the composition $K_h=K_g^*K_g$ is a trace-class positive operator. Since $g$ is continuous, its kernel is continuous. Hence
\begin{align*}
\MoveEqLeft
\int_X \varphi(x,x) h(x,x) \d x
\ov{\eqref{trace-of-trace-class}}{=} \tr K_{\varphi h}
=\tr M_\varphi(K_h)            
=\tr K_h 
\ov{\eqref{trace-of-trace-class}}{=} \int_X h(x,x) \d x.
\end{align*}
That means that
$$
\int_{X} \varphi(x,x) \int_{X}\ovl{g(z,x)}g(z,x) \d z \d x
=\int_{X}\int_{X}\ovl{g(z,x)}g(z,x) \d z \d x.
$$
We obtain
$$
\int_{X} \varphi(x,x) (\check{\ovl{g}}*g)(x,x) \d x
=\int_{X} (\check{\ovl{g}}*g)(x,x) \d x.
$$
for the convolution on the groupoid.

\begin{prop}
\label{Prop-unital}
Let $X$ be a second countable compact space equipped with a Radon measure with support $X$. Let $M_\varphi \co \B(\L^2(X)) \to \B(\L^2(X))$ be a measurable Schur multiplier with a continuous symbol $\varphi \co X \times X \to \mathbb{C}$. If $M_\varphi$ is unital then for any $x \in X$ we have $\varphi(x,x)=1$.
\end{prop}

\begin{proof}
By Lemma \ref{Lemma-unital-1}, for any $k$ we have for the strong operator topology
$$
M_{\varphi_{\alpha}}(1_{\M_{n_{\alpha}}})
= \Psi_{\alpha} M_\varphi\Phi_{\alpha}(1_{\M_{n_{\alpha}}}) 
\xra[\alpha]{}
\Psi_{\alpha} M_\varphi(\Id_{\B(\L^2(X))})
=1_{\M_{n_{\alpha}}}.
$$
Recall that by Lemma \ref{Lemma-approx} the operator $M_{\varphi_{\alpha}} \co \M_{n_{\alpha}} \to \M_{n_{\alpha}}$ is the Schur multiplier on the matrix algebra $\M_{n_{\alpha}}$ associated with the matrix $\big[\frac{1}{\mu(A_{i})}\frac{1}{\mu(A_{j})}\int_{A_{i} \times A_{j}} \varphi \big]_{1 \leq i,j \leq n_{\alpha}}$. Now, it is clear that for any $1 \leq i \leq n_{\alpha}$ we have $\frac{1}{\mu(A_{i})^2} \int_{A_{i} \times A_{i}} \varphi \xra[\alpha]{} 1$. By the convergence martingale theorem \cite[Theorem 3.32]{HvNVW1}, we have the convergence
$$
\sum_{i,j=1}^{n_\alpha} \frac{1}{\mu(A_{i})} \frac{1}{\mu(A_{j})}\bigg(\int_{A_{i} \times A_{j}} \varphi \bigg) 1_{A_{i} \times A_{j}}
\ov{\eqref{Def-phi-alpha}}{=} \E_{\alpha}(\varphi) \to \varphi
$$ 
almost everywhere on $X \times X$. Note that if $x \in X_0$ we have (as \cite[(2.7) page 233]{Bri2})
$$
\E_{\alpha}(\varphi)(x,x)
=\frac{1}{\mu(A_{i_x})^2} \int_{A_{i_x} \times A_{i_x}} \varphi.
$$ 
The proof is complete.
\end{proof}

\subsection{Lusin filtration}
\begin{remark} \normalfont
\label{Remark-Lusin}
Let $(X,\cal{A},\mu)$ be a $\sigma$-finite measure space We could use the notion of Lusin $\mu$-filtration of \cite[Definition 2.2]{Jef1}, a filtration $(\mathcal{A}_k)_{k \in \mathbb{N}}$ for which there exists a conegligible subset $X_0$ and an increasing\footnote{\thefootnote. A partition $\alpha$ is finer than a partition $\beta$ if each element of $\alpha$ is a subset of some element of $\beta$.} sequence $(\alpha_k)$ of countable partitions of $X_0$ such that 
\begin{flalign}
& \label{isonormal-almost} \text{$\mathcal{A}_k$ is the $\sigma$-algebra generated by $\alpha_k$} \\
&\label{difference-1} \text{$\mathcal{A} \cap X_0=\vee_k \mathcal{A}_k$,} \\
&\label{difference-2} \text{for any $x \in X_0$ we have $0<\mu(U_k(x))<\infty$}.  
\end{flalign}
where $U_k(x)$ is the unique set of the partition $\alpha_k$ of $X$ which contains $x$.
Note that by \cite[Theorem 2.3]{Jef1} any $\sigma$-finite Borel measure $\mu$ on a Souslin space admits a Lusin $\mu$-filtration. See \cite[page 232]{Bri2} for related things.
\end{remark}

%%%%%%%%%%%%%%%%%%%%%%%%%%%%%%%%%%%%%%%%%%%%%%%%%%%%%%%%%%%%%%%%%%%%%%%%%%%%%%%%%%%%%%%%%%%
\subsection{Autres}

Let us recall the version of Mercer's theorem for $\sigma$-finite measure spaces stated in \cite[Theorem 31]{Car1}. Let $X$ be a $\sigma$-finite measure space. Let $\varphi \co X \times X \to \mathbb{C}$ be a measurable positive definite kernel such that $\int_X \varphi(x,x) \d x <\infty$. Then the map $\L^2(X) \to \L^2(X)$, $\xi \mapsto \int_{X} \varphi(\cdot,y)\xi(y) \d y$ is a well-defined trace-class positive operator. The eigenfunctions $f_n \in \L^2(X)$ of this operator associated with those $n$ such that $\lambda_n \not=0$ and normalized by $\norm{f_n}_2 = 1$ satisfy
\begin{equation}
\label{Mercer-Cartier}
\varphi(x,y) 
=\sum_{n=0}^{\infty} \lambda_n f_n(x) \ovl{f_n(y)}
\end{equation}
almost everywhere. Moreover, for any $x,y \in X$ the series $\sum_{n \geq 0} \lambda_n f_n(x) \ovl{f_n(y)}$ converges absolutely. Note that the assumption $\int_X \varphi(x,x) \d x$ of this theorem is satisfied if $X$ is finite and if $\varphi$ is bounded.

https://mathoverflow.net/questions/253678/the-topology-of-pointwise-convergence-with-the-adjoint-operator-on-a-von-neumann
https://mathoverflow.net/questions/258829/operator-topologies-on-l-inftyx-mu

\vspace{0.2cm}

-preuve Lemma \ref{Lemma-iso-1}.

-th avec les angles Noncommutative harmonic analysis on semigroups. Yong Jiao,

-resultats ergodic consequence de l'angle

-cas d'un mult de Schur et d'un mult de Fourier (i.e. sans semigroupe)

-preuve de la carac de Schur

-mult de Birman

-open the dorr to transference

-continuite des path

\subsection{Konig}

Let us state a \textit{corrected} version of Mercer's theorem for finite measure spaces stated in \cite[Theorem 3.a.1 page 145]{Kon1}. Note that the original version (and its proof) contains some problematic errors\footnote{\thefootnote. In particular, the estimate $\sup_n \norm{f_n}_\infty <\infty$ is note proved and seems incorrect and consequently the arguments of the convergence of  \cite[(3.3)]{Kon1} and of $\sum_{n \in \N} \lambda_n(T_k) (f,f_n) f_n$ are inexact.}. However, the statement can be corrected as follows using the \textit{same} nice ideas. The only contribution is to provide a correct statement and proof.

\begin{thm}
\label{Mercer-corrected}
Let $X$ be a finite measure space and $\varphi \in \L^\infty(X \times X)$ such that the Hilbert-Schmidt operator $K_\varphi \co \L^2(X) \to \L^2(X)$ is positive. %Then the eigenvalues $(\lambda_n)$ of $K_\varphi$ are absolutely summable. 
The eigenfunctions $f_n \in \L^2(X)$ of the operator $K_{\varphi}$ associated with those $n$ such that $\lambda_n \not=0$, and normalized by $\norm{f_n}_2 = 1$, actually belong to $\L^\infty(X)$ with  
\begin{equation}
\label{Mercer}
\sup_{n \geq 0} \lambda_n \norm{f_n}_\infty^2 < \infty
\quad \text{and} \quad
\varphi(x,y) 
=\sum_{n=0}^{\infty} \lambda_n f_n(x) \ovl{f_n(y)}
\end{equation}
holds almost everywhere where the series converges absolutely almost everywhere and in $\L^\infty(X \times X)$. 
\end{thm}

\begin{proof}
The positive map $K_\varphi \co \L^2(X) \to \L^2(X)$ has a unique positive square root $S \co \L^2(X) \to \L^2(X)$. For any $\xi \in \L^1(X)$ we have
\begin{align*}
\MoveEqLeft
\norm{S(\xi)}^2_2
=\la S(\xi), S(\xi)\ra_{\L^2(X)}            
=\big\la S^2(\xi), \xi \big\ra_{\L^2(X)} 
=\la K_\varphi(\xi), \xi\ra_{\L^2(X)} \\
&=\iint_{X \times X} \varphi(x,y) \ovl{\xi(x)} \xi(y) \d x \d y
\leq \norm{\varphi}_{\L^\infty(X \times X)} \norm{\xi}_{\L^1(X)}^2.
\end{align*}
This proves that we have a well-defined continuous operator $S \co \L^1(X) \to \L^2(X)$. By duality, we have a bounded  operator $S \co \L^2(X) \to \L^\infty(X)$ with $
\norm{S}_{\L^2(X) \to \L^\infty(X)} 
\leq \norm{\varphi}_{\L^\infty(X \times X)}^{\frac{1}{2}}$. For any integer $n$, note that
$$
\lambda_n \norm{f_n}_\infty^2
=\bnorm{\sqrt{\lambda_n}f_n}_\infty^2
=\norm{S(f_n)}_\infty^2 
\leq \norm{S}_{\L^2(X) \to \L^\infty(X)}^2. 
$$
Hence, we deduce the estimate of \eqref{Mercer}. Recall that %\cite[]{} 
the embedding $i \co \L^\infty(X) \mapsto \L^2(X)$ is 2-summing with $\norm{i}_{\pi_2, \L^\infty(X) \mapsto \L^2(X)} =\mu(X)^{\frac{1}{2}}$. Consequently, the operator $S \co \L^2(X) \to \L^2(X)$ is 2-summing by composition with 
\begin{equation}
\label{estimate-pi_2}
\norm{S}_{\pi_2, \L^2(X) \to \L^2(X)} 
\leq \mu(X)^{\frac{1}{2}} \norm{\varphi}_{\L^\infty(X \times X)}^{\frac{1}{2}}
\end{equation}
Now we have using \cite[Proposition 2.a.1 page 79]{Kon1} in the first inequality
\begin{align*}
\MoveEqLeft
\sum_{n=0}^\infty \lambda_n
=\sum _{n=0}^\infty \lambda_n(S)^2            
\leq \norm{S}_{\pi_2, \L^2(X) \to \L^2(X)} 
\ov{\eqref{estimate-pi_2}}{=} \mu(X) \norm{\varphi}_{\L^\infty(X \times X)}.
\end{align*}
Hence $K_\varphi$ is trace-class. Note that $K_\varphi(\xi)=\sum_n \lambda_n\la \xi, f_n\ra f_n$ for any $\xi \in \L^2(X)$. This series converges absolutely almost everywhere. The end of the proof is classical. 
\end{proof}

See the introduction of \cite{StS1} for a nice discussion of different versions of Mercer's theorem. Note also the version of Mercer's theorem stated \cite[Theorem 31]{Car1}.

\begin{proof}
Now, we prove to the second implication. Suppose that $\varphi$ is an integrally positive definite kernel. For any $\xi \in \L^2(X)$, we have the inequality \eqref{Cartier-int-pos}, i.e. $\la K_{\varphi}(\xi), \xi \ra_{\L^2(X)} \geq 0$. We infer that the operator $K_\varphi$ is positive. %voir livre Kadison I. 
Using the above version of Mercer's theorem, we can define for any integer $n \geq 0$ the element $\alpha_n \ov{\mathrm{def}}{=} \sqrt{\lambda_n}f_n$ of $\L^2(X)$. For almost all $x \in X$, we have
$$
\sum_{n=0}^{\infty} |\alpha_n(x)|^2
=\sum_{n=0}^{\infty} \lambda_n|f_n(x)|^2
\leq \sum_{n=0}^{\infty} \lambda_n \norm{f_n}^2_\infty
<\infty.
$$
So we have an almost everywhere defined function $\alpha \co X \to \ell^2$, $x \mapsto (\alpha_n(x))_{n \geq 0}$. From \eqref{Mercer}, it is clear that 
$$
\varphi(x,y)
=\sum_{n=0}^{\infty}  \sqrt{\lambda_n}f_n(x)\ovl{\sqrt{\lambda_n}f_n(y)}
=\la \alpha(x),\alpha(y) \ra_{\ell^2}
$$ 
for almost all $x,y \in X$. Extending $\alpha$ on the whole $X$, we conclude with Example \ref{Example-kernel-2}. The remaining assertions are easy and left to the reader.
\end{proof}

\begin{proof}
The Hilbert-Schmidt operator $K_\varphi \co \L^2(X) \to \L^2(X)$ is compact and positive. By the spectral theorem, we can write
\begin{equation}
\label{Spectral-decomposition}
K_\varphi(\xi)
=\sum_{i=0}^\infty \lambda_i \la u_i,\xi \ra u_i, \quad \xi \in \L^2(X)
\end{equation}
with $\lambda_i \geq 0$ and $u_i \in \L^2(X)$. For any $i \geq 0$, define $\alpha_i =\sqrt{\lambda_i}u_i$ and the element $\alpha \co X \to \ell^2$, $x \mapsto (\alpha_i(x))$ in \textbf{?$\ell^2$?}. For any $\xi \in \L^2(X)$ and any $y$, we have  
\begin{align*}
\MoveEqLeft
\int_X \la \alpha(x), \alpha(y) \ra_{\ell^2} \xi(x) \d x
=\int_X \sum_{i=0}^{\infty} \ovl{\alpha_i(x)}\alpha_i(y) \xi(x) \d x
=\int_X \sum_{i=0}^{\infty} \lambda_i \ovl{u_i(x)}u_i(y)\xi(x) \d x  \\         
&?=?\sum_{i=0}^{\infty} \lambda_i u_i(y) \int_X \ovl{u_i(x)} \xi(x) \d x 
=\sum_{i=0}^\infty \lambda_i \la u_i,\xi \ra u_i(y) 
\ov{\eqref{Spectral-decomposition}}{=} (K_\varphi \xi)(y)
\ov{\eqref{Def-de-Kf}}{=} \int_{X} \varphi(x,y) \xi(y) \d x.
\end{align*}
Hence $
\la \alpha(x),\alpha(y) \ra_{\ell^2}
=\varphi(x,y)
$ for almost all $x,y \in X$. We conclude with Example \ref{Example-kernel-2}.
\end{proof}

\begin{remark} \normalfont
\label{remark-open-question}
It would be very useful to know if we can remove the assumption ``which induces a bounded Schur multiplier'' in the above result.
\end{remark}

The following is a partial generalization of \cite[Proposition 5.4]{Arh1}. 

%\begin{prop}
%%\label{Prop-description-Schur-mult-cp}
%\begin{enumerate}
	%\item Let $X$ be a finite atomless complete measure space. If $(T_t)_{t \geq 0}$ is a weak* continuous semigroup of selfadjoint completely positive measurable Schur multipliers on $\B(\L^2(X))$ then there exists a real Hilbert space $H$ and a weak measurable function $\alpha \co X \to H$ such that the symbol of $T_t$ is
%\begin{equation}
%%\label{symbol-semigroup-Schur}
%\varphi_t(x,y)
%=\e^{-t\norm{\alpha_x-\alpha_y}_{H}^2}, \quad \text{for almost all }x,y \in X.	
%\end{equation}	
	%
	%
%\item If $X$ is a second countable compact topological space equipped with a Radon measure with full support. If $(T_t)_{t \geq 0}$ is a weak* continuous semigroup of selfadjoint completely positive measurable Schur multipliers on $\B(\L^2(X))$ with continuous symbols $\phi_t$ and $\phi_t(x,x)=1$ for all $x \in X$, the same conclusion is true but we can choose $\alpha$ to be continuous. 
%\end{enumerate}
%\end{prop}
%

\begin{proof}
Observe that $\varphi$ belongs to $\L^2(X \times X)$. So we have a well-defined Hilbert-Schmidt operator $K_\varphi \co \L^2(X) \to \L^2(X)$. If $\xi$ belongs to the Hilbert space $\L^2(X)$, note that $\ovl{\xi} \ot \xi$ belongs to $\L^2(X \times X)$. Hence the function $(x,y) \mapsto \varphi(x,y) \ovl{\xi(x)} \xi(y)$ is integrable on $X \times X$ and by Fubini's theorem we have 
\begin{equation}
\label{Fubini}
\la K_{\varphi}(\xi), \xi \ra_{\L^2(X)}
\ov{\eqref{Def-de-Kf}}{=} \iint_{X \times X} \varphi(x,y) \ovl{\xi(x)} \xi(y) \d x \d y .
\end{equation} 
We conclude with the above Cartier's observation.
\end{proof}

A generalization of the above result to $\sigma$-finite measures spaces would also be welcome, maybe using an elementary argument to reduce the $\sigma$-finite case to the finite case. It would be interesting to know if the product of two integrally positive
definite kernels of $\L^\infty(X)$ is again an integrally positive definite kernel.

\begin{remark} \normalfont
\label{Rem-int-pos-continuous}
Let $X$ be a locally compact topological space and $\mu$ be a Radon measure on $X$ with support $X$. Consider a \textit{continuous} function $\varphi \co X \times X \to \mathbb{C}$. Then the author believe that it is folklore that $\varphi$ is an integrally positive definite kernel if and only if $\varphi$ is a positive definite kernel on $X \times X$.
\end{remark}

\end{document}